\newcommand{\Rmnum}[1]{\expandafter\@slowromancap\romannumeral #1@}
\def\abs#1{\left|#1\right|}
\def\bx{\mbox{\boldmath $x$}}
\def\vv{\mbox{\boldmath $v$}}
\def\w{\mbox{\boldmath $w$}}
\newtheorem{thm}{Theorem}[section]
\newtheorem{algorithm}{Algorithm}[section]
\theoremstyle{definition}
\theoremstyle{remark}
\newtheorem{remark}{Remark}
\numberwithin{equation}{section}
\begin{document}

\title[]{Micromagnetics simulations and phase transitions of ferromagnetics with Dzyaloshinskii-Moriya interaction}

\author{Panchi Li}
\email{LiPanchi1994@163.com}
\address{School of Mathematical Sciences, Soochow University, Suzhou, 215006, China}
\author{Shuting Gu}
\email{gst1988@126.com}
\address{College of Big Data and Internet, Shenzhen Technology University, Shenzhen 518118, China}

\author{Jin Lan}
\email{lanjin@tju.edu.cn}
\address{Center for Joint Quantum Studies and Department of Physics, School of Science, Tianjin University, 92 Weijin Road, Tianjin 300072, China.}

\author{Jingrun Chen}
\email{jingrunchen@ustc.edu.cn}
\address{School of Mathematical Sciences, University of Science and Technology of China, Hefei, Anhui 230026, China \newline
\indent Suzhou Institute for Advanced Research, University of Science and Technology of China, Suzhou, Jiangsu 215123, China}

\author{Weiqing Ren}
\email{matrw@nus.edu.sg}
\address{Department of Mathematics, National University of Singapore, 119076, Singapore}

\author{Rui Du}
\email{durui@suda.edu.cn}
\address{School of Mathematical Sciences, Soochow University, Suzhou, 215006, China.\newline\indent Mathematical Center for Interdisciplinary Research, Soochow University, Suzhou, 215006, China.}

%
%

\graphicspath{{figures/}}


\date{\today}

\dedicatory{}
%
\begin{abstract}
Magnetic skyrmions widely exist in a diverse range of magnetic systems, including chiral magnets with a non-centrosymmetric structure characterized by Dzyaloshinkii-Moriya interaction~(DMI).
In this study, we propose a generalized semi-implicit backward differentiation formula projection method, enabling the simulations of the Landau-Lifshitz~(LL) equation in chiral magnets in a typical time step-size of $1$ ps, markedly exceeding the limit subjected by existing numerical methods of typically $0.1$ ps.
Using micromagnetics simulations, we show that the LL equation with DMI reveals an intriguing dynamic instability in magnetization configurations as the damping varies. Both the isolated skyrmionium and skyrmionium clusters can be consequently produced using a simple initialization strategy and a specific damping parameter.
Assisted by the string method, the transition path between skyrmion and skyrmionium, along with the escape of a skyrmion from the skyrmion clusters, are then thoroughly examined.
The numerical methods developed in this work not only provide a reliable paradigm to investigate the skyrmion-based textures and their transition paths, but also facilitate the understandings for magnetization dynamics in complex magnetic systems.
\end{abstract}

\maketitle

\section{Introduction}
Magnetic skyrmions are whirling structures of magnetization that have been observed in diverse types of magnetic systems~\cite{Back_2020,WANG2022169905}. Due to the topological protection, skyrmions maintain the extremely stability under external perturbations, and are thus frequently treated as particle-like objects~\cite{Heinze2011,PhysRevLett.120.197203}.
In addition, the individual skyrmion possesses small size down to the nanometer range, and high mobility under electric currents~\cite{Fert2017,Sampaio2013}, thus is a promising candidate for future ultradense information storages and logic techniques~\cite{doi:10.1126/science.1166767,doi:10.1021/acs.chemrev.0c00297}.

The formation of skyrmions is facilitated by the Dzyaloshinkii-Moriya interaction (DMI)~\cite{DZYALOSHINSKY1958241,PhysRev.120.91}, but for a range of moderate strength. When the DMI strength is weak, the ground state is the homogeneous ferromagnetic domain of uniform magnetization;
in another limit of strong DMI, the spin spiral state forms.
For the medium strength of DMI, skyrmion as well as skyrmionium, a similar texture yet with a trivial topology, are spontaneously engendered~\cite{Fert2017,arxiv.2210.00892,Siemens_2016,Zhang2018,PhysRevB.94.094420}.
These magnetic textures, depicting the inhomogeneous distribution of magnetizations, represent local minima of magnetic free energy in chiral magnets with nonzero DMI. Gradient descent methods~\cite{Jin2017} therefore can be applied to search these minima states. In dynamics, the Landau-Lifshitz~(LL) equation~\cite{LandauLifshitz1935,Gilbert1955} guides the evolution of magnetization.

Micromagnetics simulations is an important tool to study the magnetization dynamics, where the LL equation is solved numerically. There are vast literatures on numerical methods for the LL equation without DMI (see recent reviews \cite{cimrak2007survey,SIAMRev2006DevelopmentLLG} and the references therein).
During the past two decades, semi-implicit projection methods \cite{WANG2001357,An2021analysisCN,LI2021semi-implicitiLLG,XIE2020109104} and the tangent plane scheme \cite{ALOUGES2006femMMMAS,ALOUGES20121345} have been developed for micromagnetics simulations to achieve a suitable trade-off between efficiency and numerical stability. However, while the DMI is crucial in the generation and transition of skyrmion (and skyrmionium), the incorporation of DM field to micromagnetics simulations is rarely studied.
The prominent obstacle in the numerical modeling is the chiral boundary conditions (a nonhomogeneous Neumann boundary condition), due to the curling nature of DM field.
Beside inhomogeneity, the DM field also imposes stringent constraint for the temporal step-size, typically in $0.1$ ps in existing methods \cite{PFEILER2020106965,doi:10.1142/S0218202522500208}.

To address these challenges posed by the DMI, we develop a generalized semi-implicit backward differentiation formula (BDF) projection scheme with the second-order accuracy in both space and time to numerically solve the LL equation with DMI.
A time step-size of $1.0$ ps is permissible in our method, that substantially reduces the computational expenses by one order of magnitude. Dynamic instability of the LL dynamics is observed that distinct stable magnetization configurations stabilize contingent on the damping parameter, from the initialization $\mathbf{e}_3$. Diverse skyrmion textures such as isolated skyrmion, isolated skyrmionium and their clusters are therefore generated, and minimal energy path (MEP) between these textures are determined with the assistance of the string method.
In addition, the protection by the chiral boundary and the propelling by local magnetic field are also demonstrated.

The remaining sections of this paper are structured as follows. In Section \ref{sec:model&methods}, we present the second-order semi-implicit method for the LL equation, along with the harmonic map heat flow technique to investigate the magnetic free energy with DMI, and the string method  to locate the transition path.
Skyrmion-based textures under different circumstances are then generated in Section \ref{sec:micromagnetics-simulations}, and the phase transitions associated with these textures are visualized  in Section \ref{sec:application-string-method}.
The concluding remarks are given in Section \ref{sec:conclusion}.

\section{Models and numerical methods}
\label{sec:model&methods}

Magnetic skyrmions were initially discovered in 2009~\cite{doi:10.1126/science.1166767} and have since been extensively studied due to their unique features, including extraordinary metastability, and their small physical size~(usually below $\sim 100\;\mathrm{nm}$), which results in high mobility at low-current densities~\cite{doi:10.1021/acs.chemrev.0c00297,Jin2017,doi:10.1126/science.1195709}. On the atomic scale, DMI has the form
\begin{equation}
E_{DM} = -\sum_{\langle ij\rangle}\mathbf{d}_{ij}\cdot(\mathbf{m}_i\times\mathbf{m}_j),
\label{equ:atomic-DMI}
\end{equation}
where $\mathbf{d}_{ij}$ denotes the DMI vector between atomic indices $i$ and $j$, having a direction dependent on the system's type~\cite{PhysRevLett.128.167202}, $\mathbf{m}_i$ represents the atomic moment with a unit length, and the summation is over all atomic indices that have finite neighbor interactions, $\langle ij\rangle$~\cite{SkyrmionsAlbert2013,PhysRevB.88.184422}. The total spin Hamiltonian comprises the Heisenberg exchange interaction, the DMI, and the interaction with an applied magnetic field $\mathbf{H}$, represented as:
\begin{equation}
E = -J\sum_{\langle ij\rangle}\mathbf{m}_i\cdot\mathbf{m}_j - \sum_{\langle ij\rangle}\mathbf{d}_{ij}\cdot(\mathbf{m}_i\times\mathbf{m}_j) - \mu_0\sum_i\mathbf{H}\cdot\mathbf{m}_i,
\label{equ:Heisenberg-equation}
\end{equation}
where $J$ is the exchange coupling constant, and $\mu_0$ denotes the vacuum permeability. The Heisenberg model \eqref{equ:Heisenberg-equation} outlined in~\cite{Jin2017,PhysRevLett.121.197202} disregards the anisotropy term and the stray field.

\subsection{The continuum model}

In the continuum model, the magnetization is represented as a vector field that is dependent on the spatial variable, $\mathbf{M} = \mathbf{M}(\bx)$ where $\bx\in\Omega$. For non-centrosymmetric bulk materials that have chirality, the continuous equivalent of \eqref{equ:Heisenberg-equation} is given by:
\begin{multline}
  \mathcal{F}[\mathbf{M}] = \int_\Omega\frac{A}{M_s^2}|\nabla\mathbf{M}|^2\mathrm{d}\bx + \frac{D}{M_s^2}\int_{\Omega}(\nabla\times \mathbf{M})\cdot\mathbf{M}\mathrm{d}\bx \\- \mu_0\int_{\Omega}\mathbf{H}\cdot\mathbf{M} \mathrm{d}\bx + \frac{\mu_0}{2}\int_{\mathbb{R}^3}|\nabla U|^2 \mathrm{d}\bx + \int_{\Omega}\Phi\left(\frac{\mathbf{M}}{M_s}\right)\mathrm{d}\bx,
\label{equ:LL-energy}
\end{multline}
where $A$ is the exchange constant, $D$ is the DMI constant, and $\Omega\subset\mathds{R}^d \; (d = 1, 2,3)$ is the region occupied by the magnetic body. Below the Curie temperature, the saturation magnetization $M_s$ is a constant and it satisfies $|\mathbf{M}(\bx)| = M_s$. The fourth term is the dipolar energy that is defined by the Newtonian potential $N(\bx) = -\frac 1{4\pi}\frac 1{|\bx|}$ in the form
\begin{equation}
  U(\bx) = \int_{\Omega}\nabla N(\bx-\bx')\cdot\mathbf{M}(\bx')\mathrm{d}\bx'.
\end{equation}
Denote the stray field by $\mathbf{H}_s = -\nabla U(\bx)$, then the dipolar energy can be rewritten as $-\frac{\mu_0}{2}\int_{\Omega}\mathbf{H}_s\cdot\mathbf{M}\mathrm{d}\bx$. The anisotropy energy $\Phi(\mathbf{M}(\bx)/M_s): \Omega\rightarrow\mathds{R}^+$ is a smooth function. For a uniaxial ferromagnet with the easy-axis direction $\mathbf{e}_1 = (1,0,0)^T$, the anisotropy energy has the form $\Phi(\mathbf{M}(\bx)/M_s) = K_u(M_2^2 + M_3^2)/M_s^2 = K_u(M_s^2-M_1^2)/M_s^2$ with $K_u$ the anisotropy constant. A magnetic skyrmion induced by the DMI has a topology number (or skyrmion number), which is defined by~\cite{Heinze2011,Fert2017}
\begin{equation}
  Q = \frac 1{4\pi M_s^3}\int\mathbf{M}\cdot\left(\frac{\partial\mathbf{M}}{\partial x}\times\frac{\partial\mathbf{M}}{\partial y}\right)\mathrm{d}x\mathrm{d}y = \pm 1.
\end{equation}
In a skyrmion lattice, the topology number is proportional to the accumulated isolated skyrmion.

In the dynamic case, the magnetization $\mathbf{M} = \mathbf{M}(\bx, t)$ follows the phenomenological Landau-Lifshitz-Gilbert~(LLG) equation (an equivalent of the LL equation)
\begin{equation}
  \frac{\partial\mathbf{M}}{\partial t} = -\gamma\mathbf{M}\times\boldsymbol{\mathcal{H}} + \frac{\alpha}{M_s}\mathbf{M}\times\frac{\partial\mathbf{M}}{\partial t},
\label{equ:LLG-equation}
\end{equation}
where $\gamma$ is the gyromagnetic parameter, $\alpha$ is the dimensionless damping parameter, and $\boldsymbol{\mathcal{H}}$ is the effective field calculated by the variation of the energy functional~\eqref{equ:LL-energy},
\begin{equation}
  \boldsymbol{\mathcal{H}} = \frac{2A}{M_s^2}\Delta\mathbf{M} - \frac{2K_u}{M_s^2}(M_2\mathbf{e}_2 + M_3\mathbf{e}_3) + \mu_0\mathbf{H} + \mu_0\mathbf{H}_{\mathrm{s}} - \frac{2D}{M_s^2}\nabla\times\mathbf{M}.
\end{equation}
As a result of calculus of variations, the non-homogeneous Neumann boundary condition, also known as the chiral boundary, is derived
\begin{equation}\label{eqn:bc}
  \frac{\partial\mathbf{M}}{\partial{\boldsymbol{\nu}}}\Big|_{\partial\Omega} = -\frac{D}{2A}\mathbf{M}\times\boldsymbol{\nu}
\end{equation}
with $\boldsymbol{\nu}$ being the unit outward normal vector. This is important for the energy dissipation law of the magnetization dynamics stated in the following theorem.
\begin{thm}
	Let $\mathbf{M}\in L^{\infty}([0,T];[H^1(\bar{\Omega})]^3)\cap C^{1}([0,T];[C^1(\bar{\Omega})]^3)$ be the solution of \eqref{equ:LLG-equation}-\eqref{eqn:bc}, then the following energy dissipation law holds
	\begin{equation}
	\frac{\mathrm{d}\mathcal{F}[\mathbf{M}]}{\mathrm{d}t} \leq 0
	\end{equation}
	if the external magnetic field is independent of time.
\end{thm}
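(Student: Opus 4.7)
The plan is to differentiate $\mathcal{F}[\mathbf{M}(\cdot,t)]$ in time, term by term, and reduce the result to $-\frac{\alpha}{\gamma M_s}\int_\Omega |\mathbf{M}_t|^2\,\mathrm{d}\bx$, which is manifestly non-positive. First, I would handle the five contributions in \eqref{equ:LL-energy} separately. The Zeeman term contributes $-\mu_0\int_\Omega\mathbf{H}\cdot\mathbf{M}_t\,\mathrm{d}\bx$ directly (using that $\mathbf{H}$ is time-independent), and the anisotropy term yields $\int_\Omega\frac{2K_u}{M_s^2}(M_2\mathbf{e}_2 + M_3\mathbf{e}_3)\cdot\mathbf{M}_t\,\mathrm{d}\bx$ by the chain rule. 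For the dipolar term, using the linearity of the map $\mathbf{M}\mapsto\mathbf{H}_s$ together with the self-adjointness inherent in the Newtonian potential representation, the time derivative reduces to $-\mu_0\int_\Omega \mathbf{H}_s\cdot\mathbf{M}_t\,\mathrm{d}\bx$.

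The exchange and DMI terms require integration by parts and generate boundary contributions; the cancellation of these boundary pieces via the chiral boundary condition \eqref{eqn:bc} is the central technical step. For the exchange term, integration by parts gives the bulk contribution $-\int_\Omega\frac{2A}{M_s^2}\Delta\mathbf{M}\cdot\mathbf{M}_t\,\mathrm{d}\bx$ plus the boundary integral $\frac{2A}{M_s^2}\int_{\partial\Omega}\mathbf{M}_t\cdot\frac{\partial\mathbf{M}}{\partial\boldsymbol{\nu}}\,\mathrm{d}S$. For the DMI term, I would apply the vector identity $\nabla\cdot(\mathbf{M}\times\mathbf{N}) = (\nabla\times\mathbf{M})\cdot\mathbf{N} - \mathbf{M}\cdot(\nabla\times\mathbf{N})$ to rewrite $\frac{d}{dt}\int_\Omega(\nabla\times\mathbf{M})\cdot\mathbf{M}\,\mathrm{d}\bx$ as $2\int_\Omega(\nabla\times\mathbf{M})\cdot\mathbf{M}_t\,\mathrm{d}\bx + \int_{\partial\Omega}(\mathbf{M}_t\times\mathbf{M})\cdot\boldsymbol{\nu}\,\mathrm{d}S$.

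Next I would invoke the chiral boundary condition to substitute $\frac{\partial\mathbf{M}}{\partial\boldsymbol{\nu}} = -\frac{D}{2A}\mathbf{M}\times\boldsymbol{\nu}$ into the exchange boundary integral. Using the scalar triple product identity $\mathbf{M}_t\cdot(\mathbf{M}\times\boldsymbol{\nu}) = (\mathbf{M}_t\times\mathbf{M})\cdot\boldsymbol{\nu}$, the exchange boundary contribution becomes $-\frac{D}{M_s^2}\int_{\partial\Omega}(\mathbf{M}_t\times\mathbf{M})\cdot\boldsymbol{\nu}\,\mathrm{d}S$, which exactly cancels the DMI boundary contribution. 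Collecting the bulk pieces precisely reproduces the definition of $\boldsymbol{\mathcal{H}}$, and therefore $\frac{d\mathcal{F}}{dt} = -\int_\Omega \boldsymbol{\mathcal{H}}\cdot\mathbf{M}_t\,\mathrm{d}\bx$.

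Finally, I would take the inner product of the LLG equation \eqref{equ:LLG-equation} with $\boldsymbol{\mathcal{H}}$: the precession term vanishes because $(\mathbf{M}\times\boldsymbol{\mathcal{H}})\cdot\boldsymbol{\mathcal{H}} = 0$, leaving $\mathbf{M}_t\cdot\boldsymbol{\mathcal{H}} = \frac{\alpha}{M_s}(\mathbf{M}\times\mathbf{M}_t)\cdot\boldsymbol{\mathcal{H}}$. Using the LLG equation once more to solve for $\mathbf{M}\times\boldsymbol{\mathcal{H}}$ in terms of $\mathbf{M}_t$ and $\mathbf{M}\times\mathbf{M}_t$, and recalling that $\mathbf{M}_t\cdot(\mathbf{M}\times\mathbf{M}_t) = 0$, yields $\mathbf{M}_t\cdot\boldsymbol{\mathcal{H}} = \frac{\alpha}{\gamma M_s}|\mathbf{M}_t|^2$, which gives the claimed inequality. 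The main obstacle is the careful bookkeeping of the two boundary integrals and verifying that the chiral condition is precisely what makes them cancel; the remaining algebraic manipulations using $|\mathbf{M}|=M_s$ and the triple-product identities are routine.
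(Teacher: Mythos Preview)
Your proposal is correct and follows essentially the same route as the paper: the same vector identity for the curl, the same integration-by-parts for the exchange term, the same cancellation of boundary integrals via the chiral boundary condition, and the same reduction to $\int_\Omega\boldsymbol{\mathcal{H}}\cdot\mathbf{M}_t\,\mathrm{d}\bx = \frac{\alpha}{\gamma M_s}\int_\Omega|\mathbf{M}_t|^2\,\mathrm{d}\bx$. The only cosmetic difference is that the paper obtains this last identity in one stroke by testing the LLG equation against $\gamma\boldsymbol{\mathcal{H}}-\frac{\alpha}{M_s}\mathbf{M}_t$, whereas you test with $\boldsymbol{\mathcal{H}}$ and then substitute---equivalent manipulations.
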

\begin{proof}
	For vectors $\vv, \w\in H^1(\Omega)$, it holds that $\nabla\cdot(\vv\times\w) = \w\cdot(\nabla\times\vv) - \vv\cdot(\nabla\times\w)$.
	Taking the volume integral and applying the divergence theorem, we have
	\begin{equation}
	\int_{\Omega}\nabla\cdot(\vv\times\w)\mathrm{d}\bx = \int_{\Omega}\w\cdot(\nabla\times\vv) - \vv\cdot(\nabla\times\w)\mathrm{d}\bx = \int_{\partial\Omega}(\vv\times\w)\cdot\boldsymbol{\nu}\mathrm{d}S.
	\end{equation}
	Then, we get
	\begin{equation}
	\frac{\mathrm{d}}{\mathrm{d}t}\int_{\Omega}\mathbf{M}\cdot(\nabla\times\mathbf{M})\mathrm{d}\bx = 2\int_{\Omega}\frac{\partial\mathbf{M}}{\partial t}\cdot(\nabla\times\mathbf{M})\mathrm{d}\bx - \int_{\partial\Omega}\Big(\mathbf{M}\times\frac{\partial\mathbf{M}}{\partial t}\Big)\cdot\boldsymbol{\nu}\mathrm{d}S.
	\end{equation}
	Due to the nonhomogeneous boundary condition, we have
	\begin{align}
	\int_{\Omega}\frac{\partial\mathbf{M}}{\partial t}\cdot\Delta\mathbf{M}\mathrm{d}\bx &= \int_{\partial\Omega}\frac{\partial\mathbf{M}}{\partial t}\cdot\nabla\mathbf{M}\cdot\boldsymbol{\nu}\mathrm{d}S - \frac 1{2}\frac{\mathrm{d}}{\mathrm{d}t}\int_{\Omega}|\nabla\mathbf{M}|^2\mathrm{d}\bx \nonumber\\
	&= -\frac{D}{2A}\int_{\partial\Omega}\frac{\partial\mathbf{M}}{\partial t}\cdot(\mathbf{M}\times\boldsymbol{\nu})\mathrm{d}S - \frac 1{2}\frac{\mathrm{d}}{\mathrm{d}t}\int_{\Omega}|\nabla\mathbf{M}|^2\mathrm{d}\bx \nonumber\\
	&= \frac{D}{2A}\int_{\partial\Omega}\Big(\mathbf{M}\times\frac{\partial\mathbf{M}}{\partial t}\Big)\cdot\boldsymbol{\nu}\mathrm{d}S - \frac 1{2}\frac{\mathrm{d}}{\mathrm{d}t}\int_{\Omega}|\nabla\mathbf{M}|^2\mathrm{d}\bx.
	\end{align}
	Taking inner product with \eqref{equ:LLG-equation} by $\gamma\boldsymbol{\mathcal{H}}-\frac{\alpha}{M_s}\frac{\partial\mathbf{M}}{\partial t}$, we arrive at
	\begin{align*}
	&0 \leq \frac{\alpha}{\gamma M_s}\int_{\Omega}\Big(\frac{\partial\mathbf{M}}{\partial t}\Big)^2\mathrm{d}\bx = \int_{\Omega}\frac{\partial\mathbf{M}}{\partial t}\cdot\boldsymbol{\mathcal{H}}\mathrm{d}\bx
	\\= &\frac{2A}{M_s^2}\int_{\Omega}\frac{\partial\mathbf{M}}{\partial t}\cdot\Delta\mathbf{M}\mathrm{d}\bx - \frac{2K_u}{M_s^2}\int_{\Omega}\frac{\partial\mathbf{M}}{\partial t}\cdot(M_2\mathbf{e}_2+M_3\mathbf{e}_3)\mathrm{d}\bx +\\ &\mu_0\int_{\Omega}\frac{\partial\mathbf{M}}{\partial t}\cdot\mathbf{H}_{\mathrm{s}}\mathrm{d}\bx + \mu_0\int_{\Omega}\frac{\partial\mathbf{M}}{\partial t}\cdot\mathbf{H}\mathrm{d}\bx - \frac{2D}{M_s^2}\int_{\Omega}\frac{\partial\mathbf{M}}{\partial t}\cdot(\nabla\times\mathbf{M})\mathrm{d}\bx\\
	= &\frac{D}{M_s^2}\int_{\partial\Omega}\Big(\mathbf{M}\times\frac{\partial\mathbf{M}}{\partial t}\Big)\cdot\boldsymbol{\nu}\mathrm{d}\bx - \frac{A}{M_s^2}\frac{\mathrm{d}}{\mathrm{d}t}\int_{\Omega}|\nabla\mathbf{M}|^2\mathrm{d}\bx - \frac{K_u}{M_s^2}\frac{\mathrm{d}}{\mathrm{d}t}\int_{\Omega}(M_2^2+M_3^2)\mathrm{d}\bx +\\
	&\mu_0\frac{\mathrm{d}}{\mathrm{d}t}\int_{\Omega}\mathbf{M}\cdot(\mathbf{H}+\mathbf{H}_{\mathrm{s}})\mathrm{d}\bx -\frac{D}{M_s^2}\frac{\mathrm{d}}{\mathrm{d}t}\int_{\Omega}\mathbf{M}\cdot(\nabla\times\mathbf{M})\mathrm{d}\bx - \frac{D}{M_s^2}\int_{\partial\Omega}\Big(\mathbf{M}\times\frac{\partial\mathbf{M}}{\partial t}\Big)\cdot\boldsymbol{\nu}\mathrm{d}\bx \\
	= &-\frac{\mathrm{d}\mathcal{F}}{\mathrm{d}t},
	\end{align*}
	where we use the fact that the external magnetic field is independent of time. This completes the proof.
\end{proof}

Skyrmion-based patterns induced by the DMI exhibit superior mobility in response to current fields. To account for the magnetic interactions with an external current, we include the spin transfer torque (STT) supplied by the spin-polarized current in the LLG model, as described in~\cite{PhysRevLett.93.127204}:
\begin{equation}
\frac{\partial\mathbf{M}}{\partial t} = -\gamma\mathbf{M}\times\boldsymbol{\mathcal{H}} + \frac{\alpha}{M_s}\mathbf{M}\times\frac{\partial\mathbf{M}}{\partial t} - \frac{b}{M_s^2}\mathbf{M}\times(\mathbf{M}\times(\mathbf{j}\cdot\nabla)\mathbf{M}) - \frac{b\xi}{M_s}\mathbf{M}\times(\mathbf{j}\cdot\nabla)\mathbf{M},
\label{equ:LLG-STT}
\end{equation}
Herein, $P$ denotes the polarization rate, $\mathbf{j}$ represents the current density vector, $b = P\mu_B/(eM_s(1+\xi^2))$, where $\mu_B$ is the Bohr magneton, $e$ is the elementary charge, and $\xi$ is a dimensionless parameter that characterizes the degree of non-adiabaticity.

Denote
\begin{equation*}
  \mathbf{\hat{H}} = - \frac{2K_u}{M_s^2}(M_2\mathbf{e}_2 + M_3\mathbf{e}_3) + \mu_0\mathbf{H} + \mu_0\mathbf{H}_{\mathrm{s}} + \frac{b}{\gamma M_s^2}\mathbf{M}\times(\mathbf{j}\cdot\nabla)\mathbf{M} + \frac{b\xi}{\gamma M_s}(\mathbf{j}\cdot\nabla)\mathbf{M},
\end{equation*}
then \eqref{equ:LLG-STT} can be rewritten as the LL form
\begin{multline}
  \frac{\partial\mathbf{M}}{\partial t} = -\frac{\gamma}{1+\alpha^2}\mathbf{M}\times\Big(\frac{2A}{M_s^2}\Delta\mathbf{M}-\frac{2D}{M_s^2}\nabla\times\mathbf{M} + \mathbf{\hat{H}}\Big) -\\ \frac{\gamma\alpha}{1+\alpha^2}\mathbf{M}\times\Big(\mathbf{M}\times\Big(\frac{2A}{M_s^2}\Delta\mathbf{M}-
\frac{2D}{M_s^2}\nabla\times\mathbf{M} + \mathbf{\hat{H}}\Big)\Big).
\end{multline}
Define the dimensionless variables $\mathbf{m} = \mathbf{M}/M_s$, $\mathbf{h}_{\mathrm{e}} = \mathbf{H}/M_s$, $\mathbf{h}_{\mathrm{s}} = \mathbf{H}_{\mathrm{s}}/M_s$ and spatial rescaling $\bx \rightarrow L\bx$ with $L$ being the length of ferromagnetic body. The dimensionless magnetic free energy $I[\mathbf{m}]$ satisfying $\mathcal{F}[\mathbf{M}] = \mu_0M_s^2I[\mathbf{m}]$ is
\begin{equation}
  I[\mathbf{m}] =\int_{\Omega}\frac{\epsilon}{2}|\nabla\mathbf{m}|^2 + \frac{\kappa}{2}(\nabla\times \mathbf{m})\cdot\mathbf{m} - \mathbf{h}_{\mathrm{e}}\cdot\mathbf{m} - \frac{1}{2}\mathbf{h}_{\mathrm{s}}\cdot\mathbf{m} + \Phi\left(\mathbf{m}\right)\mathrm{d}\bx
\end{equation}
with $\epsilon = 2A/(\mu_0M_s^2L^2)$ and $\kappa = 2D/(\mu_0M_s^2L)$. Meanwhile, we take the time rescaling $t \rightarrow (1+\alpha^2)(\mu_0\gamma M_s)^{-1}t$ and get the dimensionless LL equation
\begin{equation}
  \frac{\partial\mathbf{m}}{\partial t} = -\mathbf{m}\times\Big(\epsilon\Delta\mathbf{m}-\kappa\nabla\times\mathbf{m} + \mathbf{\hat{h}}\Big) - \alpha\mathbf{m}\times\Big(\mathbf{m}\times\Big(\epsilon\Delta\mathbf{m}-
\kappa\nabla\times\mathbf{m} + \mathbf{\hat{h}}\Big)\Big),
\end{equation}
where $\mathbf{\hat{h}} = - q(m_2\mathbf{e}_2 + m_3\mathbf{e}_3) + \mathbf{h}_{\mathrm{e}} + \mathbf{h}_{\mathrm{s}} + \frac{b}{\mu_0\gamma M_s^2}\mathbf{m}\times(\mathbf{j}\cdot\nabla)\mathbf{m} + \frac{b\xi}{\mu_0\gamma M_s^2}(\mathbf{j}\cdot\nabla)\mathbf{m}$ with $q = 2K_u/(\mu_0M_s^2)$. Note that $\mathbf{\hat{H}} = M_s\mathbf{\hat{h}}$.

In the absence of spin-polarized current, i.e. $\mathbf{j} = \mathbf{0}$,  we have the following LL equation with boundary and initial conditions
\begin{equation}\left\{\begin{aligned}
  \frac{\partial\mathbf{m}}{\partial t} &= -\mathbf{m}\times\mathbf{h} - \alpha\mathbf{m}\times\left(\mathbf{m}\times\mathbf{h}\right)\;\;\;  &\mathrm{in}~[0, T]\times\Omega,\\
  \frac{\partial\mathbf{m}}{\partial\boldsymbol{\nu}} &= -\kappa_b\mathbf{m}\times\boldsymbol{\nu}  &\mathrm{on}~[0, T]\times\partial\Omega,\\
  \mathbf{m}(0) &= \mathbf{m}^0\;\;\;\mathrm{with} \abs{\mathbf{m}^0} = 1 &\mathrm{in}~\{t=0\}\times\Omega,
\end{aligned}\right.
\end{equation}
where
\begin{equation}
  \mathbf{h} = -\frac{\delta I}{\delta\mathbf{m}} = \epsilon\Delta\mathbf{m}- \kappa\nabla\times\mathbf{m} - q(m_2\mathbf{e}_2 + m_3\mathbf{e}_3) + \mathbf{h}_{\mathrm{e}} + \mathbf{h}_s.
\end{equation}
Here $\kappa_b = DL/(2A)$ is proportional to $L$, implying the stiffness of the boundary.

A stable or metastable state, such as skyrmion and skyrmionium, given by the LL equation satisfies
\begin{equation}
  \mathbf{m} = c\mathbf{h}
\end{equation}
with $c$ being a constant for $|\mathbf{m}| = 1$ in a point-wise sense. Due to the energy dispassion of the LL equation, the convergent procedure should stop at the local minimizer of the energy functional \eqref{equ:LL-energy} with
\begin{equation}
  \left\{\begin{aligned}
    &\frac{\delta{I}}{\delta\mathbf{m}} = 0,\\
    &\text{s.t. } |\mathbf{m}| = 1.
  \end{aligned}\right.
\end{equation}
Gradient decent methods, such as the nonlinear conjugate gradient method~\cite{Jin2017}, therefore can be applied to search the minima of the magnetic free energy. Alternatively, the stable magnetization configuration can be obtained by simulating the dynamics driven by the harmonic map heat flow equation. In this case, the minimization problem is formulated as
\begin{equation}
  \inf\Big\{ I(\mathbf{m}) \;|\; \frac{\partial\mathbf{m}}{\partial\boldsymbol{\nu}} = -\kappa_b\mathbf{m}\times\boldsymbol{\nu} \text{ on } \partial\Omega, |\mathbf{m}(\bx)| = 1, \forall \bx \in \Omega \Big\}.
\end{equation}
Using the Lagrange multiplier method with $\hat\lambda$ being the Lagrange multiplier, we get
\begin{equation}
  L(\mathbf{m}, \hat{\lambda}) = I(\mathbf{m}) + \frac{\hat{\lambda}}{2}\int_{\Omega}((\mathbf{m})^2 - 1)\mathrm{d}\bx.
\end{equation}
At stationary points, it holds
\begin{align*}
  \frac{\delta L}{\delta\mathbf{m}} = -\mathbf{h} + \hat{\lambda}\mathbf{m} = 0,\\
  (\mathbf{m})^2 - 1 = 0.
\end{align*}
So we have $\hat{\lambda} = (\mathbf{m}, \mathbf{h})$. Therefore, the harmonic map heat flow reads as
\begin{equation}
  \frac{\partial\mathbf{m}}{\partial t} = -\frac{\delta L}{\delta\mathbf{m}} = \mathbf{h} -(\mathbf{m},\mathbf{h})\mathbf{m}
\end{equation}
subject to the constraint $|\mathbf{m}| = 1$ and the nonhomogeneous boundary condition. Thus, we also consider the harmonic map heat flow system
\begin{equation}
\left\{\begin{aligned}
  &\frac{\partial\mathbf{m}}{\partial t} = -\mathbf{m}\times(\mathbf{m}\times\mathbf{h}),\;\;\;  &\mathrm{in}~[0, T]\times\Omega,\\
  &\frac{\partial\mathbf{m}}{\partial\boldsymbol{\nu}} = -\kappa_b\mathbf{m}\times\boldsymbol{\nu},\;\;\;  &\mathrm{on}~[0, T]\times\partial\Omega,\\
  &\mathbf{m}(0) = \mathbf{m}^0,\;\;\; &\mathrm{in}~\{t=0\}\times\Omega,
\end{aligned}\right.
  \label{equ:damping-equation}
\end{equation}
where $|\mathbf{m}^0| = 1$.

\subsection{Numerical methods}

FeGe is a representative chiral ferromagnet with strong spin-orbit coupling. Here we list its physical parameters in \cref{tab:physical-parmeters}.
\begin{table}[htbp]
  \centering
\caption{Physical parameters of FeGe ($L = 80\;\mathrm{nm}$).}
\begin{tabular}{||c|c|c||}
\hline
Parameter & Value (unit) & Dimensionless quantity\\
\hline
$K_u$ & $0\;(\mathrm{J}/\mathrm{m}^3)$ & $q = 0 $ \\
\hline
$A$ & $8.78\times10^{-12}\;(\mathrm{J}/\mathrm{m})$ & $\epsilon \approx1.48\times10^{-2}$\\
\hline
$M_s$ & $3.84\times10^5\;(A/\mathrm{m})$ &\\
\hline
$D$ & $1.58\times10^{-3}\;(\mathrm{J}/\mathrm{m}^2)$ & $\kappa \approx0.21$, $\kappa_d \approx7.20$\\
\hline
\end{tabular}
\label{tab:physical-parmeters}
\end{table}
It is clear that $\kappa$ and $\kappa_d$ are two leading parameters due to the DMI. The first parameter results a strong curl field in the LL equation and the second one leads to a stiff boundary condition, both of which will be examined later.

In the framework of finite difference method, we construct unknowns on half grid points as $\mathbf{m}(x_i,y_j,z_k) = \mathbf{m}((i-\frac 1{2})h_x, (j-\frac 1{2})h_y, (k-\frac 1{2})h_z)$. Here $h_x = 1/nx$, $h_y = 1/ny$, $h_z = 1/nz$ and $h = h_x = h_y = h_z$ holds for uniform spatial meshes. The indexes $i,j,k$ are valued with $i = 1,\cdots,nx$, $j = 1,\cdots,ny$ and $k = 1,\cdots, nz$. For the sake of clarity, the approximations of the boundary condition and operator $\nabla$ on $x$-direction are depicted below.
\begin{figure}[h]
  \centering
\includegraphics[width = 5.in]{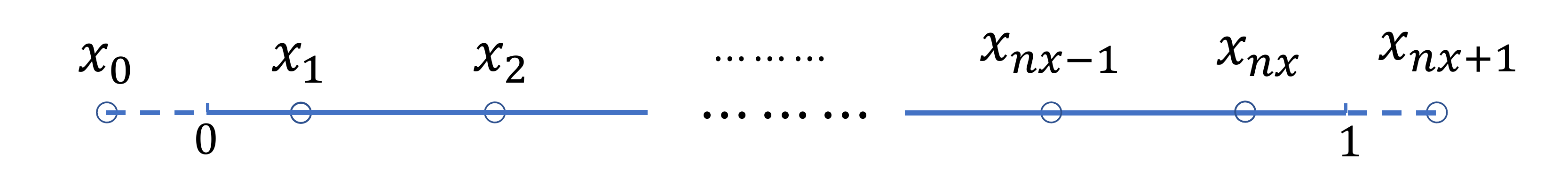}
    \caption{Grids along the $x$-direction with two ghost points $x_0$ and $x_{nx+1}$.}
    \label{fig:spatial-mesh}
\end{figure}

Let $\boldsymbol{\nu} = (1,0,0)^T$, the boundary condition at $x = 1$ is discretized as
\begin{equation*}
  \frac{\mathbf{m}(x_{nx+1},y_j,z_k) - \mathbf{m}(x_{nx},y_j,z_k)}{h_x} = -\kappa_b\frac{\mathbf{m}(x_{nx+1},y_j,z_k) + \mathbf{m}(x_{nx},y_j,z_k)}{2}\times\boldsymbol{\nu},
\end{equation*}
i.e.
\begin{equation*}
  \left\{\begin{aligned}
      &m_1(x_{nx+1},y_j,z_k) = m_1(x_{nx},y_j,z_k),\\
      &m_2(x_{nx+1},y_j,z_k) = \frac{1-k_{bx}^2}{1+k_{bx}^2}m_2(x_{nx},y_j,z_k) - \frac{2k_{bx}}{1+k_{bx}^2}m_3(x_{nx},y_j,z_k),\\
      &m_3(x_{nx+1},y_j,z_k) = \frac{2k_{bx}}{1+k_{bx}^2}m_2(x_{nx},y_j,z_k) + \frac{1-k_{bx}^2}{1+k_{bx}^2}m_3(x_{nx},y_j,z_k),
    \end{aligned}\right.
\label{equ:rboundary}
\end{equation*}
where $k_{bx} = \kappa_bh_x/2$. Similarly, discretization of the boundary condition at $x = 0$ yields
\begin{equation*}
    \left\{\begin{aligned}
      &m_1(x_0,y_j,z_k) = m_1(x_1,y_j,z_k),\\
      &m_2(x_0,y_j,z_k) = \frac{1-k_{bx}^2}{1+k_{bx}^2}m_2(x_1,y_j,z_k) + \frac{2k_{bx}}{1+k_{bx}^2}m_3(x_1,y_j,z_k),\\
      &m_3(x_0,y_j,z_k) = -\frac{2k_{bx}}{1+k_{bx}^2}m_2(x_1,y_j,z_k) + \frac{1-k_{bx}^2}{1+k_{bx}^2}m_3(x_1,y_j,z_k).
    \end{aligned}\right.
\label{equ:lboundary}
\end{equation*}
Boundary conditions along $y$ and $z$ directions are discretized in a similar way.

The operator $\nabla_x$ is discretized as
\begin{align*}
  &\nabla_x\mathbf{m}(x_1,y_j,z_k) \approx \frac{\mathbf{m}(x_1,y_j,z_k) - \mathbf{m}(x_0,y_j,z_k)}{h_x},\\
  &\nabla_x\mathbf{m}(x_i,y_j,z_k) \approx \frac{\mathbf{m}(x_{i+1},y_j,z_k) - \mathbf{m}(x_{i-1},y_j,z_k)}{2h_x},\\
  &\nabla_x\mathbf{m}(x_{nx},y_j,z_k) \approx \frac{\mathbf{m}(x_{nx+1},y_j,z_k) - \mathbf{m}(x_{nx},y_j,z_k)}{h_x},
\end{align*}
where $i = 2,\cdots,nx-1$, $j = 1, \cdots, ny$ and $k = 1, \cdots, nz$. $\nabla_y$ and $\nabla_z$ are discretized similarly.
\begin{remark}
In thin films, the DMI arises from robust spin-orbit couplings at the edges, whereby magnetization is absent beyond the sample. In addition, the exchange interactions are non-symmetric and intrinsically directional at the boundaries. To numerically capture such features, the introduction of ghost points outside the material is commonly employed to approximate spatial derivatives within the vicinity of the boundaries. The resulting discretization of the $\nabla$ operator is achieved as described above.
\end{remark}
The Laplacian operator is descritized as
\begin{align*}
  \Delta\mathbf{m}(x_i,y_j,z_k) \approx &\frac{\mathbf{m}(x_{i-1},y_j,z_k) - 2\mathbf{m}(x_i,y_j,z_k) + \mathbf{m}(x_{i+1},y_j,z_k)}{h_x^2} + \\ &\frac{\mathbf{m}(x_i,y_{j-1},z_k) - 2\mathbf{m}(x_i,y_j,z_k) + \mathbf{m}(x_i,y_{j+1},z_k)}{h_y^2} + \\ &\frac{\mathbf{m}(x_i,y_j,z_{k-1}) - 2\mathbf{m}(x_i,y_j,z_k) + \mathbf{m}(x_i,y_j,z_{k+1})}{h_z^2}.
\end{align*}

Regarding time-stepping, the standard second-order backward differentiation formula (BDF2) is employed
\begin{multline}
  \frac{3\mathbf{m}^{n+1} - 4\mathbf{m}^n + \mathbf{m}^{n-1}}{2k} = -\mathbf{m}^{n+1}\times(\epsilon\Delta\mathbf{m}^{n+1} - \kappa\nabla\times\mathbf{m}^{n+1} + \mathbf{\hat{h}}^{n+1})\nonumber\\
      -\alpha\mathbf{m}^{n+1}\times(\mathbf{m}^{n+1}\times(\epsilon\Delta\mathbf{m}^{n+1} - \kappa\nabla\times\mathbf{m}^{n+1} + \mathbf{\hat{h}}^{n+1})).
\end{multline}
The prevalent feature of this approach involves the utilization of an implicit methodology, thereby necessitating nonlinear solvers at each time step. Notably, a semi-implicit scheme incorporating a projection step has been developed with the motivation of ensuring maintenance of $|\mathbf{m}| = 1$~\cite{XIE2020109104}. This scheme treats the DMI term as implicit, primarily due to its dominance in the effective field. The ensuing semi-implicit BDF2 projection scheme exhibits the following features. Given its second-order time accuracy, simulations in micromagnetics employing this scheme may adopt a step-size $\Delta t = 1\;\mathrm{ps}$. Conversely, earlier approaches were limited in their ability to utilize sub-picosecond time step-sizes.
\begin{algorithm}
  \label{alg:solving-LL}
   Set $\mathbf{\hat{m}}^{n+1} = 2\mathbf{m}^{n} - \mathbf{m}^{n-1}$ and $\mathbf{\tilde{h}}^{n+1} = 2\mathbf{\hat{h}}^{n} - \mathbf{\hat{h}}^{n-1}$.
  \begin{enumerate}[\rm(i)]
  \item Compute $\mathbf{\tilde{m}}^{n+1}$ such that
    \begin{align}\label{equ:solving-in-iteration}
      \frac{3\mathbf{\tilde{m}}^{n+1} - 4\mathbf{m}^n + \mathbf{m}^{n-1}}{2\Delta t} = -\mathbf{\hat{m}}^{n+1}\times(\epsilon\Delta\mathbf{\tilde{m}}^{n+1} - \kappa\nabla\times\mathbf{\tilde{m}}^{n+1} + \mathbf{\tilde{h}}^{n+1})\nonumber\\
      -\alpha\mathbf{\hat{m}}^{n+1}\times(\mathbf{\hat{m}}^{n+1}\times(\epsilon\Delta\mathbf{\tilde{m}}^{n+1} - \kappa\nabla\times\mathbf{\tilde{m}}^{n+1} + \mathbf{\tilde{h}}^{n+1}))
    \end{align}
  \item Projection onto $\mathcal{S}^2$:
    \begin{equation}
      \mathbf{m}^{n+1} = \frac 1{|\mathbf{\tilde{m}}^{n+1}|}\mathbf{\tilde{m}}^{n+1}
      \label{equ:projection}
    \end{equation}
\end{enumerate}
\end{algorithm}

For the harmonic map heat flow, a similar algorithm is proposed.
\begin{algorithm}
  \label{alg:solving-GF}
  Set $\mathbf{\hat{m}}^{n+1} = 2\mathbf{m}^{n} - \mathbf{m}^{n-1}$ and $\mathbf{\tilde{h}}^{n+1} = 2\mathbf{\hat{h}}^{n} - \mathbf{\hat{h}}^{n-1}$.
  \begin{enumerate}[\rm(i)]
    \item Compute $\mathbf{\tilde{m}}^{n+1}$ such that
    \begin{equation*}
      \frac{3\mathbf{\tilde{m}}^{n+1} - 4\mathbf{m}^n + \mathbf{m}^{n-1}}{2\Delta t} =
      -\mathbf{\hat{m}}^{n+1}\times(\mathbf{\hat{m}}^{n+1}\times(\epsilon\Delta\mathbf{\tilde{m}}^{n+1} - \kappa\nabla\times\mathbf{\tilde{m}}^{n+1} + \mathbf{\tilde{h}}^{n+1})).
    \end{equation*}
    \item Projection onto $\mathcal{S}^2$:
    \begin{equation*}
      \mathbf{m}^{n+1} = \frac 1{|\mathbf{\tilde{m}}^{n+1}|}\mathbf{\tilde{m}}^{n+1}.
    \end{equation*}
  \end{enumerate}
\end{algorithm}

The semi-implicit BDF1 approach is employed as a precursor to the BDF2 scheme, and allows for the calculation of $\mathbf{m}^1$. This initial step does not impart any alteration to the overall second-order accuracy exhibited by the numerical scheme. Additionally, it is worth mentioning that if the relative change in energy between two consecutive time steps is less than $1.0\times10^{-9}$ in the simulation, a steady state is considered to have been attained.

In order to search the minimum energy transition paths of skyrmion-based magnetic textures, here we further introduce the string method~\cite{PhysRevB.66.052301,doi:10.1063/1.2720838}. By definition, a curve $\boldsymbol{\gamma}$ connecting two local minima satisfies
\begin{equation}
  (\nabla I)^{\perp}(\boldsymbol{\gamma}) = 0,
\end{equation}
where $(\nabla I)^{\perp}$ is the component of $\nabla I$ normal to $\boldsymbol{\gamma}$. Then $\boldsymbol{\gamma}:=\{\varphi(a), a\in[0,1]\}$ defines a MEP from one local minima to the other. After an initial parametrization of the curve is picked and usually the equal arc-length parametrization is used, the curve evolve to the MEP following the equation
\begin{equation}
  \varphi_t = -(\nabla I(\varphi))^{\perp} + \lambda\tau,
  \label{equ:Lag-Grad-flow}
\end{equation}
where $(\nabla I(\varphi))^{\perp} = \nabla I(\varphi) - (\nabla I(\varphi), \tau)\tau$, $\tau$ is the unit tangent vector along $\varphi$ with $\tau = \varphi_{a}/\abs{\varphi_{a}}$, and $\lambda$ is the Lagrange multiplier uniquely determined by the choice of parametrization. Let $\bar{\lambda} = \lambda + (\nabla I(\varphi), \tau)$, then the \eqref{equ:Lag-Grad-flow} can be rewritten as
\begin{equation}
  \varphi_t = -\nabla I(\varphi) + \bar{\lambda}\tau.
  \label{equ:Lag-Grad-flow2}
\end{equation}
In order to find the MEP, the time-splitting method is applied to solve \eqref{equ:Lag-Grad-flow2}. Details are given in Algorithm \ref{alg:string-method}. A convergent string means that all the images satisfy
\begin{equation}
  (\nabla I(\varphi_i))^{\perp} = 0.
\end{equation}
In our simulations, this is replaced by the stopping criterion ($TOL = 1.0\text{e-}06$) as
\begin{equation}
  \max_i||I(\varphi_i, t^n) - I(\varphi_i, t^{n+1})||_{\infty} \leq TOL.
  \label{equ:stationary_criterion}
\end{equation}

\begin{algorithm}
  \label{alg:string-method}
Choose an initial string $\boldsymbol{\gamma}^0$ with inclusion of $N+1$ images such that $\boldsymbol{\gamma}^0:=\{\varphi_i = \varphi(a_i), a_i\in[0,1], i = 0, \cdots, N\}$.
\begin{itemize}
\item{Step 1:} Evolve the images on the string following the gradient flow
  \begin{equation}
    \partial_t\varphi_{i} = -\nabla I(\varphi_i).
  \end{equation}
  From current images $\{\varphi_i^n\}$, $\{\varphi_i^*\}$ are obtained by one time stepping. Here an image is denoted by the magnetization configuration $\mathbf{m}(\bx), \bx\in\Omega$, and the harmonic map heat flow \eqref{equ:damping-equation} is solved by Algorithm \ref{alg:solving-GF}.
  \item{Step 2:} Compute the parametrization $\{a_i^*\}$ by
      \begin{equation*}
        s_0 =0, s_i = s_{i-1}+\abs{\varphi_i^* - \varphi_{i-1}^*}, i = 1,\cdots, N,
      \end{equation*}
      and then the updated mesh $\{a_i^*\}$ is normalized by $a_i^* = s_i/s_N$.
\item{Step 3:} Parametrization of the string by equal arc-length and projection. The images $\{\hat\varphi_i^{n+1}\}$ are obtained by cubic spline interpolation at uniform grid points $\{a_i = i/N\}$, and the new images $\{\varphi_i^{n+1}\}$ are obtained after the projection step.
\item{Step 4:} Go back to Step 1 and iterate until convergence.
\end{itemize}
\end{algorithm}

\begin{remark}
The approach to the string method utilized in this present work differs from the standard version outlined in~\cite{doi:10.1063/1.2720838,doi:10.1063/1.1536737}. This variance essentially stems from the novel conservation requirement on the length of magnetization. Accordingly, Step 3 in the method is crafted to incorporate a projection step. Furthermore, treating images as tensors characterized by $\varphi_i\in\mathds{R}^{3*nx*ny*nz}$ makes it possible to consider the existence of a reversible mapping between $\varphi$ and $\mathbf{m}$, which is captured by mappings $\mathcal{L}: \varphi\rightarrow \mathbf{m}$ and $\mathcal{L}^{-1}: \mathbf{m}\rightarrow \varphi$ in the implemented technique.
\end{remark}

\section{Micromagnetics simulations}
\label{sec:micromagnetics-simulations}
This section initiates with the utilization of Algorithm \ref{alg:solving-LL} and Algorithm \ref{alg:solving-GF} in generating stable magnetic textures. The simulation carried out considers a FeGe sample having a consistent spatial mesh size of $2\times2\times2\;\mathrm{nm}^3$.

Specifically, this simulation focuses on a sample with dimensions $80\times80\times6\;\mathrm{nm}^3$. One observation of notable interest is the onset of a dynamic instability in the LL equation, attributed to the presence of the DMI. Starting from an initial uniform state of $\mathbf{m}^{0} = (0,0,1)^T$, the system relaxes into different stable configurations in response to variations in the damping parameter.
\begin{figure}[htbp]
  \centering
  \subfloat[$Q = -1$]{\includegraphics[width=1.6in]{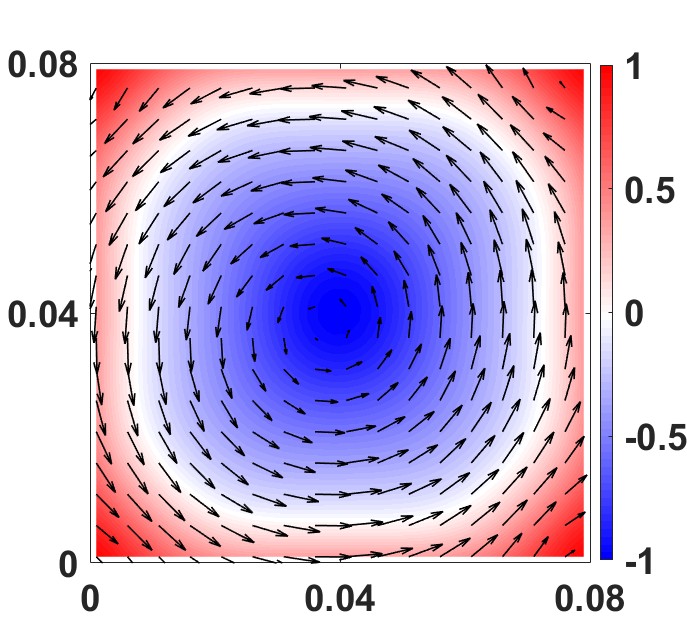}}
  \subfloat[$Q = 0$]{\includegraphics[width=1.6in]{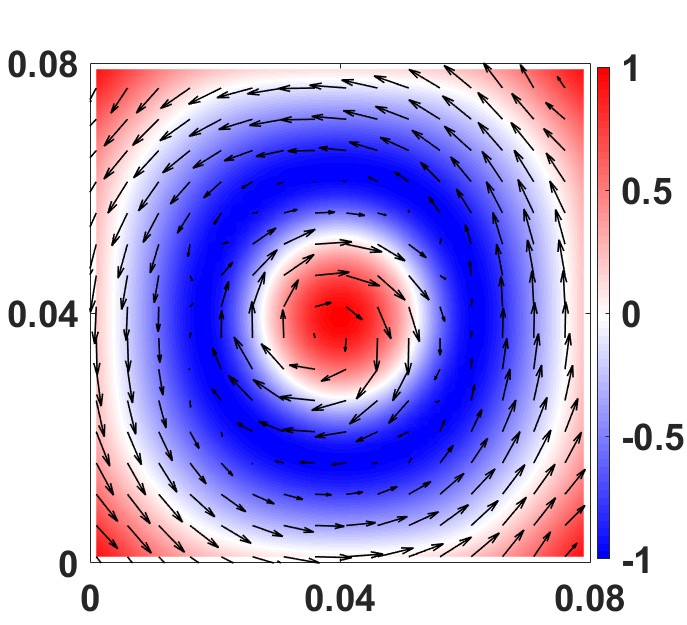}}
  \subfloat[$Q = 0$]{\includegraphics[width=1.6in]{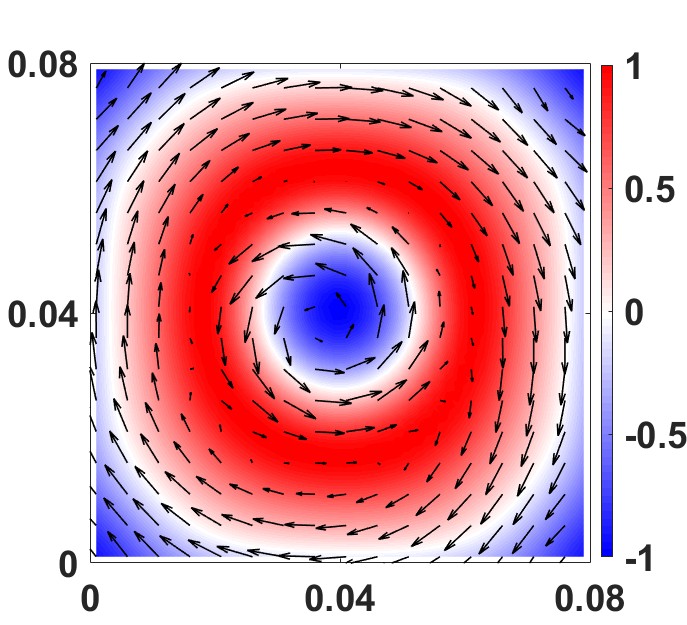}}
  \subfloat[$Q = 1$]{\includegraphics[width=1.6in]{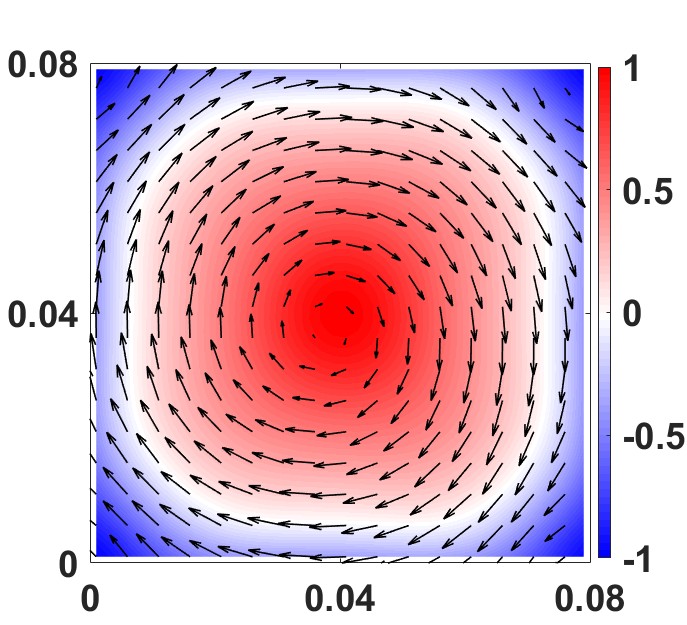}}
  \caption{Isolated skyrmions and isolated skyrmioniums by means of the LL equation with different damping parameters $\alpha = 0.05, 0.07, 0.2, 0.6$. The color of background represents the component $m_3$ and arrows represent the in-plane components $m_1$ and $m_2$. }
  \label{fig:stable-skyrmion}
\end{figure}
As depicted in \cref{fig:stable-skyrmion}, the LL equation readily generates skyrmions ($Q = \pm1$) and skyrmioniums ($Q = 0$) with diverse damping parameters $\alpha$. We also document the energies and spatially averaged magnetization of the four configurations accounting for skyrmions and skyrmioniums in \cref{tab:isolated-sky-skym}. For varied $\alpha$ values within the interval $(0, 1]$, the system reliably converges towards one of the four configurations.
\begin{table}[h]
  \centering
  \caption{Energy and spatially averaged magnetization $\langle\mathbf{m}\rangle = (\langle m_1\rangle, \langle m_2\rangle, \langle m_3\rangle)^T$ of isolated skyrmions and isolated skyrmioniums.}
  \begin{tabular}{||c|c|c|c|c|c||}
    \hline
    Label & energy($10^{-18}\;\mathrm{J}$) & $\langle m_x\rangle$ & $\langle m_y\rangle$ & $\langle m_z\rangle$ & $\alpha$ \\
    \hline
    \cref{fig:stable-skyrmion}(A) & -3.8989 & 0.0047 & 0.0038 & -0.1204 & 0.05/0.06 \\
    \hline
    \cref{fig:stable-skyrmion}(B) & -3.0936 & -0.0035 & 0.0036 & -0.2627 & 0.07/0.08/0.09/0.1 \\
    \hline
    \cref{fig:stable-skyrmion}(C) & -3.0938 & 0.0188 & -0.0038 & 0.2629 & 0.2/0.3/0.4/0.5 \\
    \hline
    \cref{fig:stable-skyrmion}(D) & -3.8986 & 0.0021 & 0.0042 & 0.1216 & 0.01/$\cdots$/0.04/0.6/$\cdots$/1.0 \\
    \hline
  \end{tabular}
  \label{tab:isolated-sky-skym}
\end{table}

The publication~\cite{PhysRevB.94.094420} reveals that a skyrmionium is a composite structure composed of two topological magnetic skyrmions possessing $Q = 1$ and $Q = -1$, and its motion is swifter when driven by an external out-of-plane current than that of a skyrmion. In order to gain a better understanding of their fundamental differences, the energy density distribution is visualized. Let $\mathcal{L}(\mathbf{m})$ denote the energy density distribution in the absence of the Dzyaloshinskii-Moriya interaction (DMI), and $\mathcal{D}(\mathbf{m})$ denote the energy density distribution corresponding to the DMI. The combination of these two, $\mathcal{T}(\mathbf{m}) = \mathcal{L}(\mathbf{m})+\mathcal{D}(\mathbf{m})$, represents the total energy density distribution. As illustrated in \cref{fig:eng_dis_iso_sky_skym}, the difference between the skyrmion and skyrmionium is mainly attributable to $\mathcal{D}(\mathbf{m})$, as their energy distributions without the DMI appear similar along the axes passing through the center. However, the energy density distribution of the DMI is almost entirely opposite.
\begin{figure}[ht]
  \centering
  \subfloat[$\mathcal{L}(\mathbf{m})$]{\includegraphics[width=1.8in]{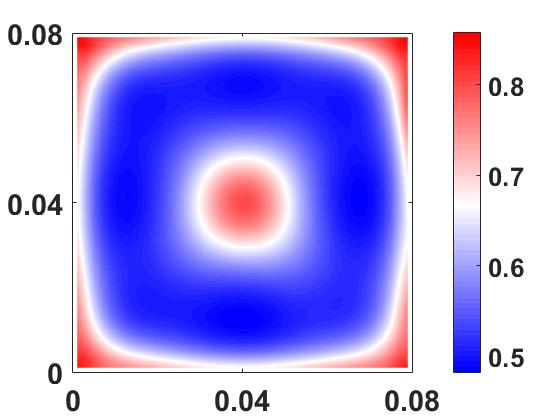}}
  \subfloat[$\mathcal{D}(\mathbf{m})$]{\includegraphics[width=1.8in]{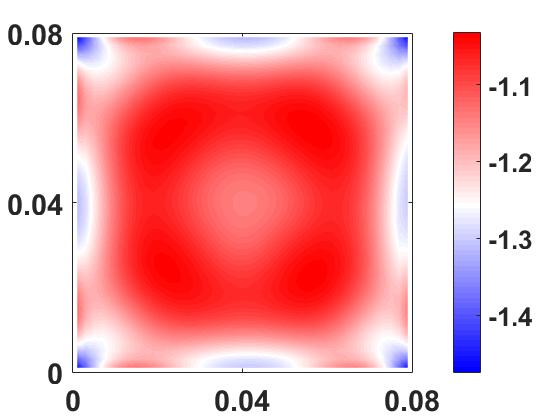}}
  \subfloat[$\mathcal{T}(\mathbf{m})$]{\includegraphics[width=1.8in]{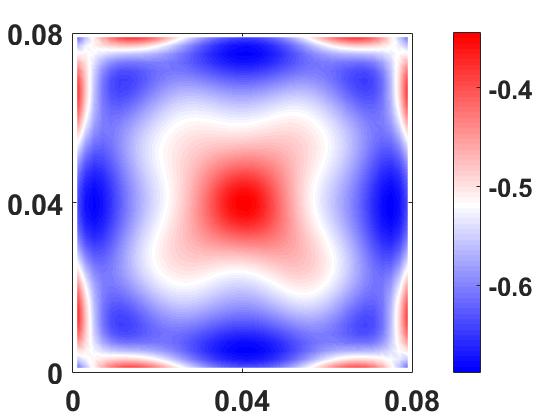}}
  \quad
  \subfloat[$\mathcal{L}(\mathbf{m})$]{\includegraphics[width=1.8in]{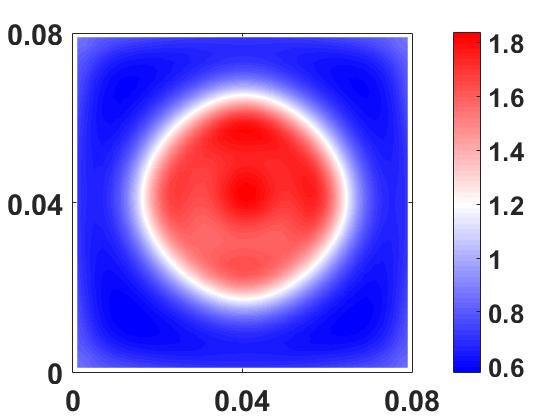}}
  \subfloat[$\mathcal{D}(\mathbf{m})$]{\includegraphics[width=1.8in]{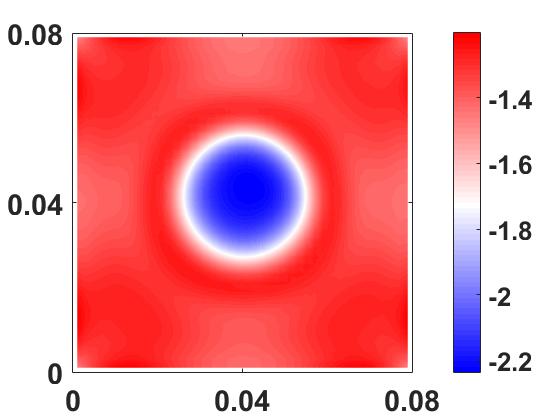}}
  \subfloat[$\mathcal{T}(\mathbf{m})$]{\includegraphics[width=1.8in]{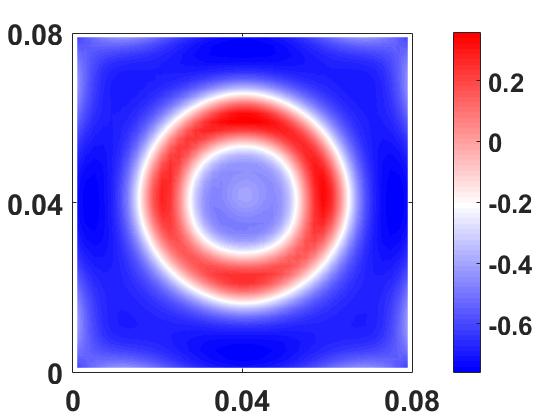}}
  \caption{The energy density distribution along the centered slice of the material in the $xy$-plane. Top row: energy density distribution of the skyrmion with $Q = 1$. Bottom row: energy density distribution of the skyrmionium.}
\label{fig:eng_dis_iso_sky_skym}
\end{figure}

In the context of the harmonic map heat flow equation, the relaxation of the system is aimed toward achieving a single skyrmion configuration. As portrayed in \cref{fig:80nm_compare_LLandGD}, a meticulous scrutiny of the quantity $\langle m_3\rangle$ indicates a swift formation of the skyrmion, followed by a prolonged period of relaxation that is required to satisfy the stability criterion.
\begin{figure}[h]
  \centering
\subfloat[Snapshots of relaxation driven by the harmonic map heat flow.]{
  \begin{overpic}
    [width=1.2in]{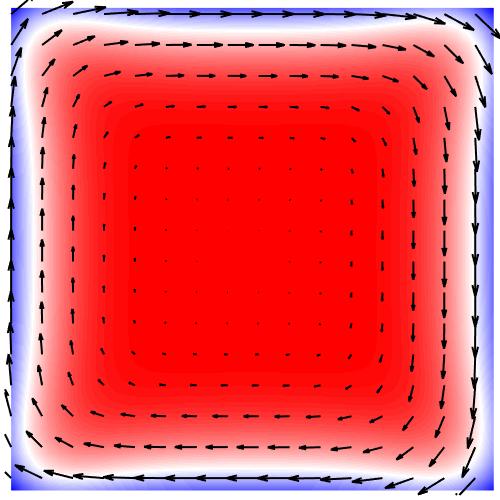}
    \put(5,80){\fcolorbox{black}{white}{\tiny{$10\;\mathrm{ps}$}}}
  \end{overpic}
  \begin{overpic}
    [width=1.2in]{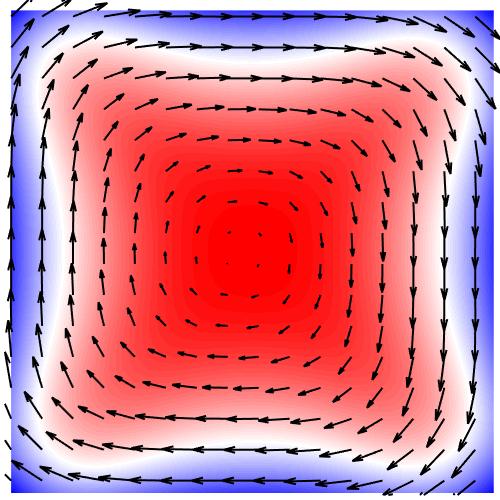}
    \put(5,80){\fcolorbox{black}{white}{\tiny{$20\;\mathrm{ps}$}}}
  \end{overpic}
  \begin{overpic}
    [width=1.2in]{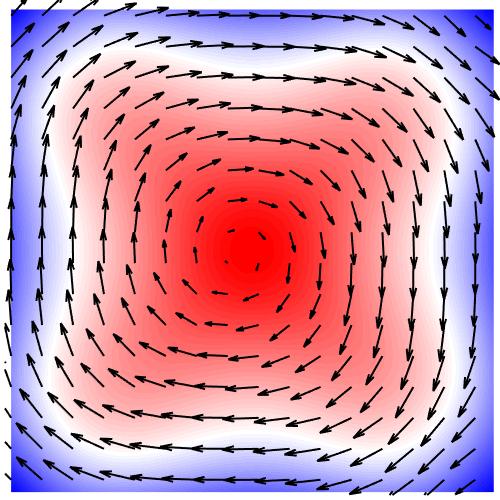}
    \put(5,80){\fcolorbox{black}{white}{\tiny{$45\;\mathrm{ps}$}}}
  \end{overpic}
  \begin{overpic}
    [width=1.2in]{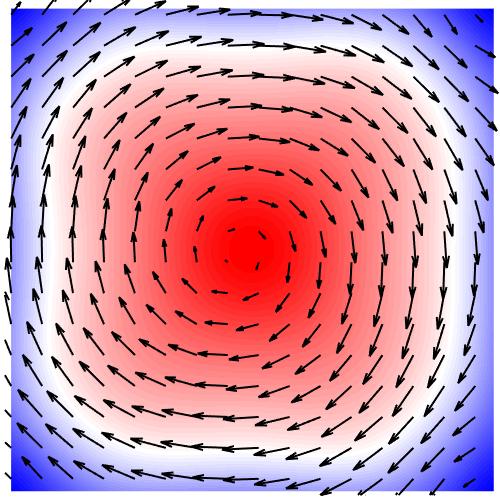}
    \put(5,80){\fcolorbox{black}{white}{\tiny{$100\;\mathrm{ps}$}}}
  \end{overpic}
  \begin{overpic}
    [width=1.2in]{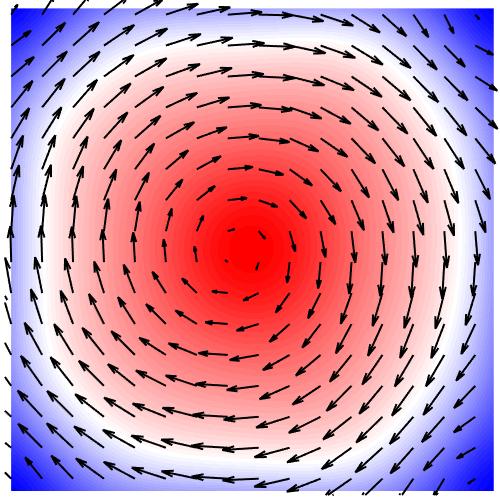}
    \put(5,80){\fcolorbox{black}{white}{\tiny{$350\;\mathrm{ps}$}}}
  \end{overpic}}
  \quad
  \subfloat[Snapshots of relaxation driven by the LL equation.]{\begin{overpic}
    [width=1.2in]{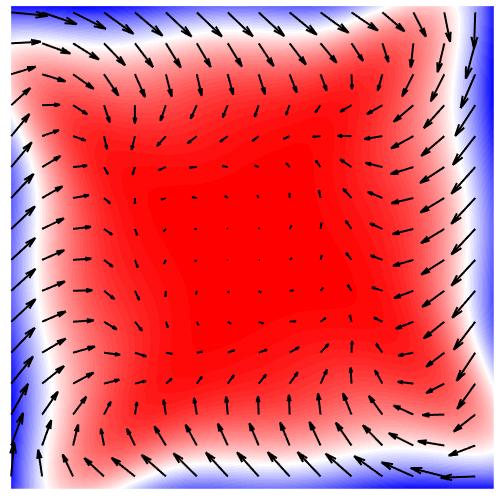}
    \put(5,80){\fcolorbox{black}{white}{\tiny{$10\;\mathrm{ps}$}}}
  \end{overpic}
  \begin{overpic}
    [width=1.2in]{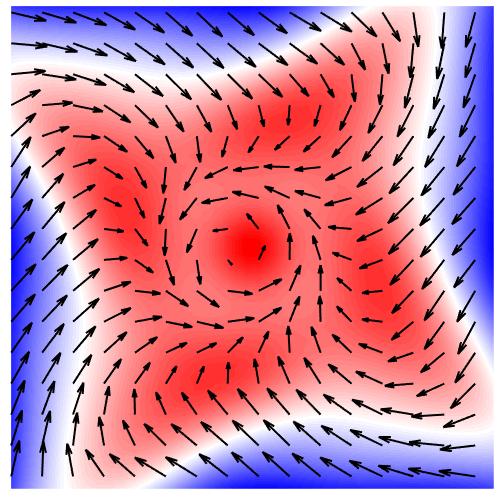}
    \put(5,80){\fcolorbox{black}{white}{\tiny{$20\;\mathrm{ps}$}}}
  \end{overpic}
  \begin{overpic}
    [width=1.2in]{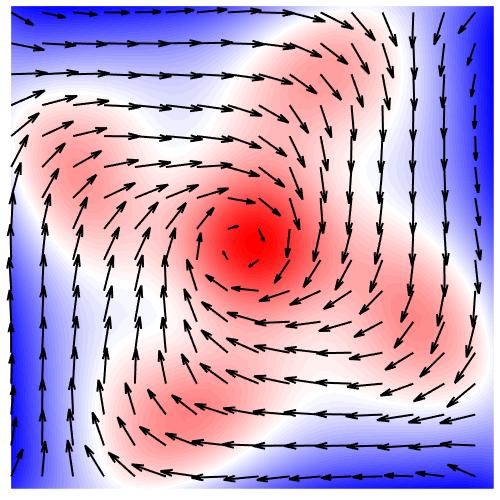}
    \put(5,80){\fcolorbox{black}{white}{\tiny{$45\;\mathrm{ps}$}}}
  \end{overpic}
  \begin{overpic}
    [width=1.2in]{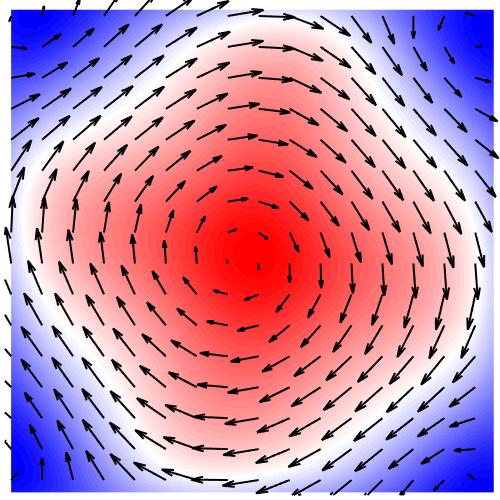}
    \put(5,80){\fcolorbox{black}{white}{\tiny{$100\;\mathrm{ps}$}}}
  \end{overpic}
  \begin{overpic}
    [width=1.2in]{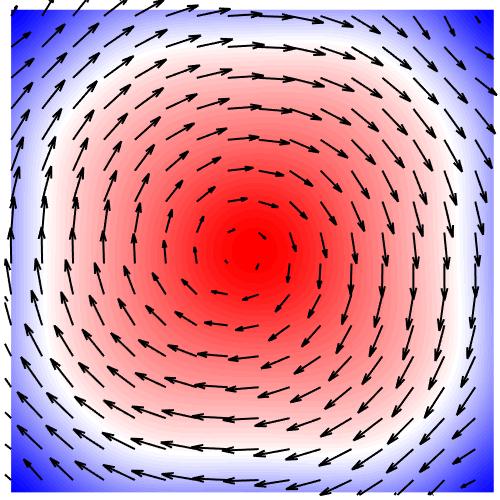}
    \put(5,80){\fcolorbox{black}{white}{\tiny{$350\;\mathrm{ps}$}}}
  \end{overpic}}
\quad
  \subfloat[Energy evolution.]{\includegraphics[width=3.0in]{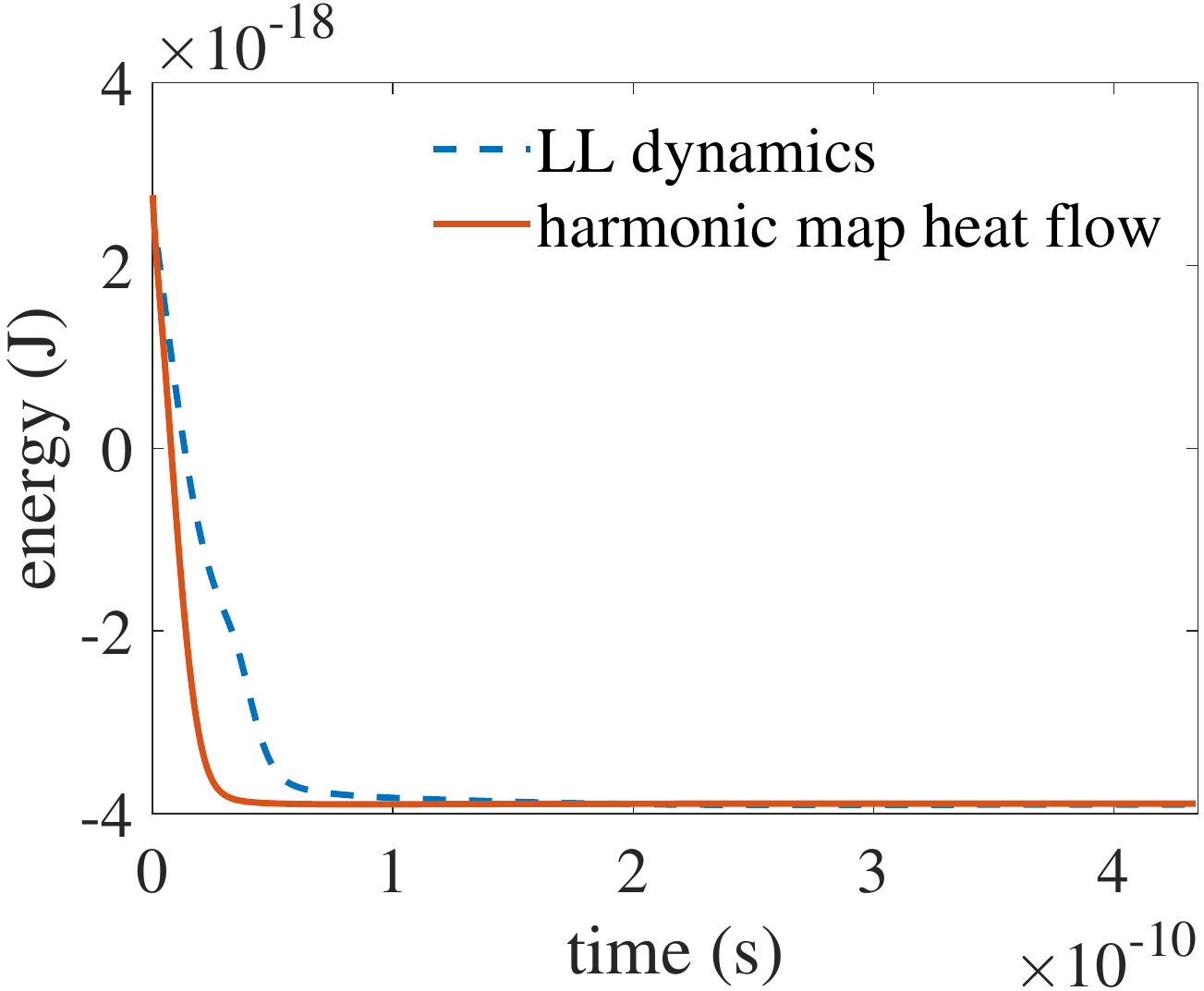}}
  \subfloat[$\langle\mathbf{m}\rangle$ evolution.]{\includegraphics[width=3.0in]{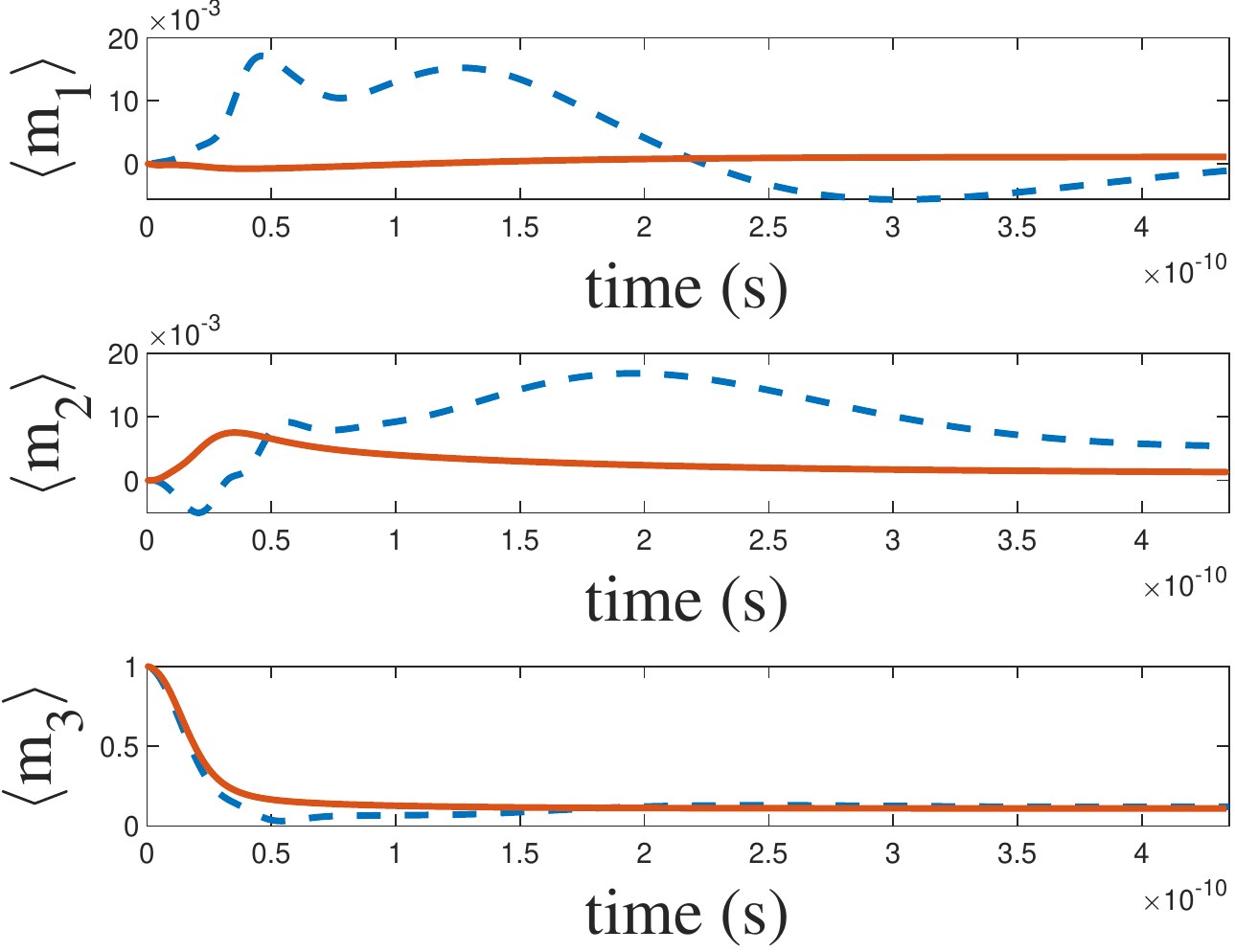}}
  \caption{Comparison of the relaxation driven by the LL equation and the harmonic map heat flow. The same initialization $\mathbf{m}^0 = (0,0,1)^T$ is used and the same skyrmion with $Q = 1$ is reached. The damping parameter $\alpha = 0.6$ is used in the LL equation.}
\label{fig:80nm_compare_LLandGD}
\end{figure}

Isolated skyrmion and skyrmionium structures have been successfully generated in a controllable manner. Subsequently, we have expanded our efforts towards generating skyrmion clusters in a ferromagnetic sample with dimensions of $200\times200\times6\;\mathrm{nm}^3$. Initiated from a configuration where $\mathbf{m}^0 = (0,0,1)^T$, the system undergoes relaxation to yield varying clusters as we adjust the damping parameter $\alpha$. A skyrmion cluster is characterized based on the number of skyrmions it comprises and the nature of their interconnected structures. \Cref{fig:stable-lattice-skyrmion} depicts representative skyrmion clusters generated by employing the LL equation.
\begin{figure}[h]
  \centering
  \subfloat[$\alpha = 0.04$.]{\includegraphics[width=1.5in]{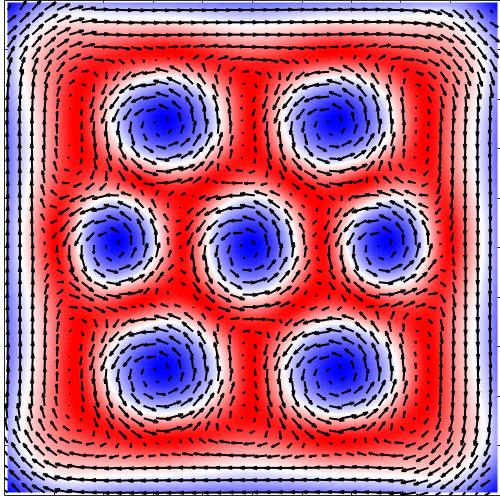}}
  \subfloat[$\alpha = 0.05$.]{\includegraphics[width=1.5in]{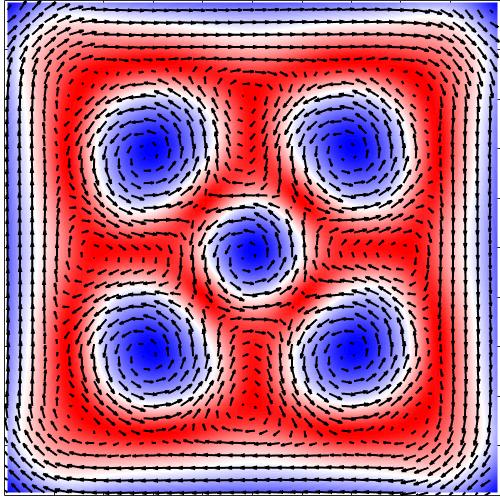}}
  \subfloat[$\alpha = 0.09$.]{\includegraphics[width=1.5in]{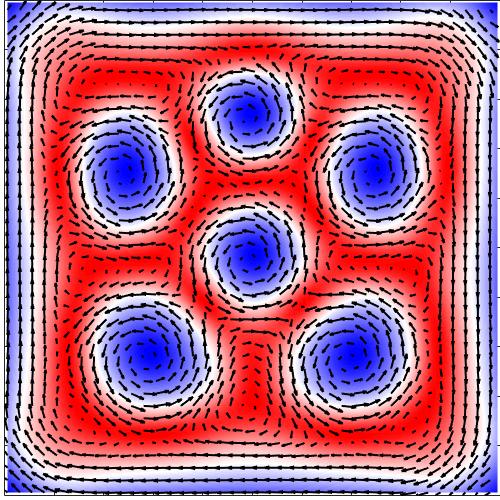}}
  \subfloat[$\alpha = 0.5$.]{\includegraphics[width=1.5in]{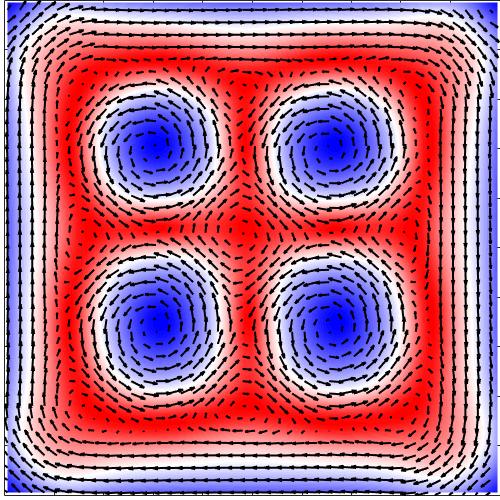}}
  \caption{Representative skyrmion clusters formed in the $200\times200\times6\;\mathrm{nm}^3$ ferromagnet.}
  \label{fig:stable-lattice-skyrmion}
\end{figure}

The mutual interactions between individual skyrmions can lead to the formation of skyrmion lattices and clusters. As illustrated in \cref{fig:stable-lattice-skyrmion}, the local structure of skyrmion lattices can be realized by the presence of skyrmion clusters. For instance, by adopting a specific initialization scheme, the square skyrmion lattice structure can be easily generated as a periodic replica of the square skyrmion cluster, as shown in either \cref{fig:stable-lattice-skyrmion}(D) or \cref{fig:200nm-skyL}.
\begin{figure}[h]
  \centering
  \subfloat{\includegraphics[width=1.6in]{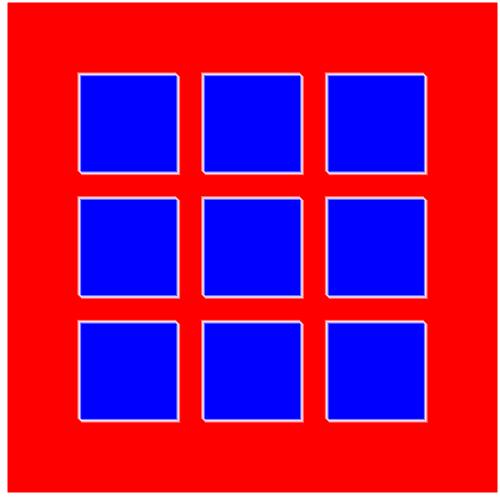}}\quad
\subfloat{\includegraphics[width=1.6in]{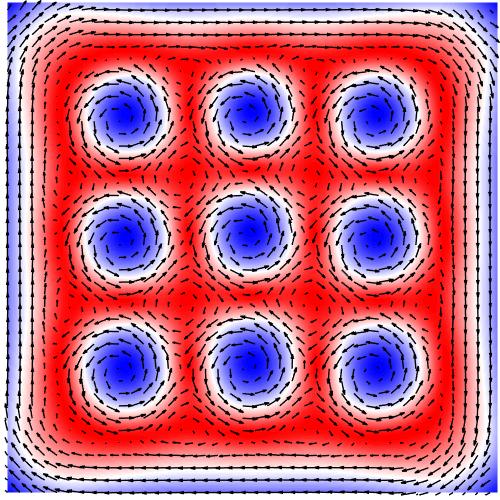}}\quad
\subfloat{\includegraphics[width=1.6in]{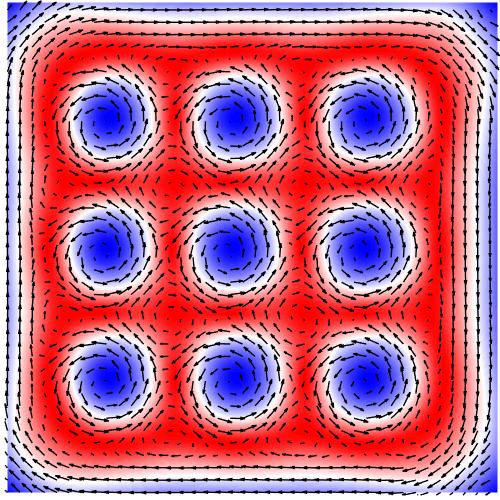}}
\caption{Left: Initial magnetization configuration. Middle and right: Stable magnetization configurations formed by the LL equation and harmonic map heat flow, respectively. The back ground color represents the magnetization component $m_3$. The magnetization within the blue blocks is $(0,0,-1)^T$, while the remianing area is $(0,0,1)^T$.}
\label{fig:200nm-skyL}
\end{figure}
The initial magnetization exhibits a rectangular shape with in-plane dimensions of $40\;\mathrm{nm}\times40\;\mathrm{nm}$, a configuration which yields skyrmions having a diameter of $40\;\mathrm{nm}$. For the skyrmion lattice generated via the LL equation, the stable energy and spatially averaged magnetization are $-1.7066\textrm{e-}17\;\mathrm{J}$ and $\langle\mathbf{m}\rangle = (-0.58\textrm{e-}04,0.39\textrm{e-}03,0.23)^T$, respectively. Meanwhile, the skyrmion lattice produced by means of the harmonic map heat flow method possesses stable energy and averaged magnetization values of $-1.6971\textrm{e-}17\;\mathrm{J}$ and $\langle\mathbf{m}\rangle = (0.28\textrm{e-}03,0.04,0.23)^T$, respectively.

The emergence of isolated skyrmioniums and skyrmionium clusters is a subject that has received little attention in the existing literature. In this study, we leverage the dynamic instability exhibited by the LL equation to create isolated skyrmioniums and skyrmionium clusters with precise damping parameter settings. When the ferromagnetic material attains an $L$ value of $80\;\mathrm{nm}$, both isolated skyrmions and skyrmioniums can be observed, with the latter's radius being considerably larger than that of the former, specifically in an unsaturated phase. Consequently, we broaden our investigation by considering a sample of dimensions $500\times500\times6\;\mathrm{nm}^3$ and adopt two different initialization strategies to generate skyrmionium clusters.

Our first strategy involves placing nine rectangles with $m_3 = -1$ within the initial magnetization configuration and adjusting the inter-block distance. The dimensions of the blocks are fixed at $100\;\mathrm{nm}\times100\;\mathrm{nm}$, while the damping parameter is set to $\alpha=0.2$. When the blocks are spaced at $25\;\mathrm{nm}$, four skyrmions emerge near the corners along with a set of nine skyrmioniums arranged in the shape of a flower, as displayed in \cref{fig:500nm_mixed_skyrmions}(B). As the inter-block distance increases to $66\;\mathrm{nm}$, four skyrmions are generated at diverse locations in a regular skyrmionium lattice. It is important to note that the damping parameter plays a crucial role in the formation of skyrmionium clusters, unlike skyrmion clusters and skyrmion lattices. Furthermore, in contrast to skyrmion clusters and lattices, individual skyrmions consistently coexist with skyrmioniums in the skyrmionium clusters.
\begin{figure}[ht]
  \centering
  \subfloat[ ]{\includegraphics[width=1.8in]{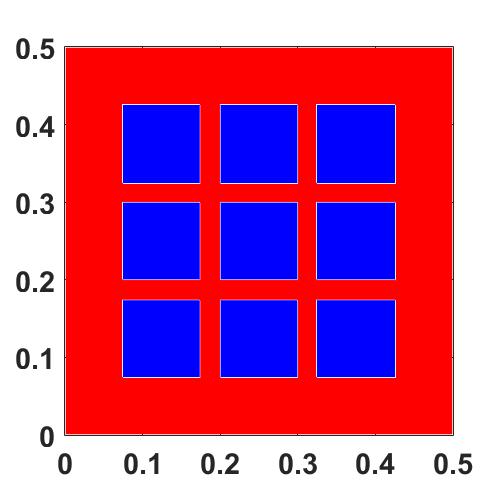}}
  \subfloat[ ]{\includegraphics[width=1.8in]{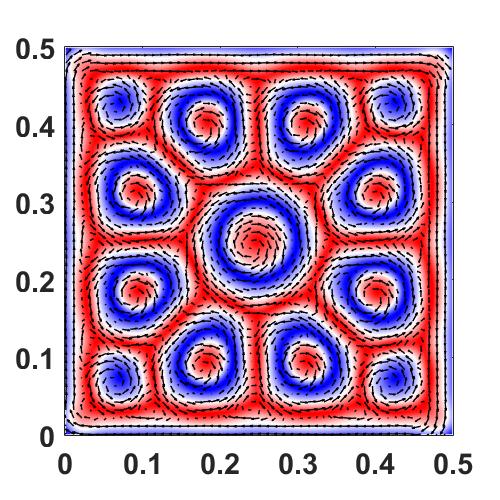}}
  \subfloat[ ]{\includegraphics[width=1.8in]{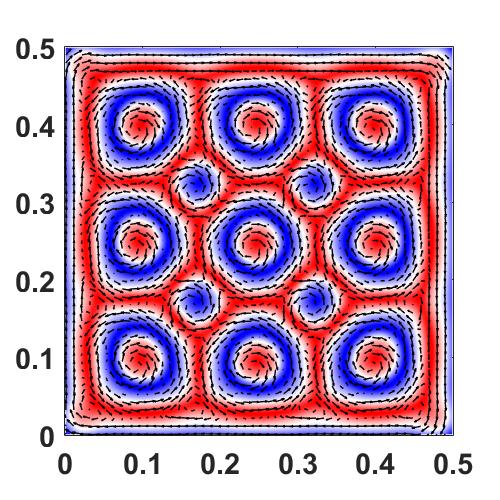}}
  \caption{Initial configuration (A) and skyrmionium clusters (B) and (C). The sample size is $500\;\mathrm{nm}\times 500\;\mathrm{nm}\times6\;\mathrm{nm}$ and the damping parameter is $\alpha = 0.2$ in the LL equation.}
  \label{fig:500nm_mixed_skyrmions}
\end{figure}

Next, we reduce the size of the blocks in the initial configuration to $50\;\mathrm{nm}\times50\;\mathrm{nm}$ and set $\alpha = 0.1$ in the LL equation. As illustrated in \cref{fig:500nm-lattice}, a skyrmion lattice comprising of 29 skyrmions and an additional isolated skyrmion located at a corner is produced. However, we observe a defective lattice in this case due to the existence of three distinct types of skyrmion clusters. Specifically, the skyrmion clusters are categorized into three types: (1) a skyrmion is encircled by five neighboring skyrmions; (2) a skyrmion is surrounded by six neighboring skyrmions; and (3) a skyrmion is surrounded by seven neighboring skyrmions. The occurrence of the first two types of clusters is also observed in a ferromagnetic material with dimensions of $200\;\mathrm{nm}\times 200\;\mathrm{nm}\times6\;\mathrm{nm}$.
\begin{figure}[ht]
  \centering
  \subfloat[Initialization]{\includegraphics[width=1.8in]{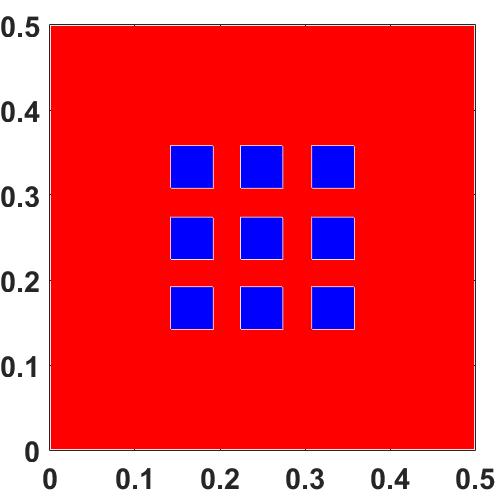}}
  \subfloat[Equilibrium]{\includegraphics[width=1.8in]{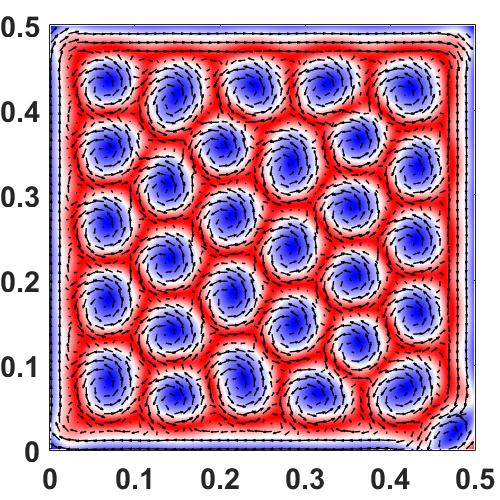}}
  \subfloat[$\mathcal{T}(\mathbf{m})$]{\includegraphics[width=1.88in]{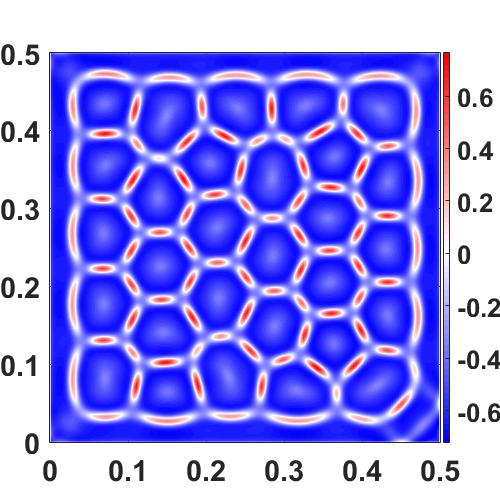}}
  \caption{A skyrmion lattice over a $500\;\mathrm{nm}\times500\;\mathrm{nm}\times6\;\mathrm{nm}$ ferromagnet. Here an isolated skyrmion is presented near the bottom-right corner, which resulted from the defectiveness of the sample. From the initial magnetization configuration (A), the system reaches (B) with spatial energy density distribution (C), following the LL dynamics with $\alpha = 0.1$.}
  \label{fig:500nm-lattice}
\end{figure}
When the isolated skyrmions coalesce into an interconnected structure, the energy density at the junctions of any two skyrmions exhibits a notably higher magnitude when compared to other locations.

Comprehending the phase transition between magnetic textures holds significant importance in the field of spintronics. Pertaining to skyrmion-based textures, experimental observations have revealed various transitions such as those between skyrmion clusters facilitated by a magnetic field~\cite{doi:10.1073/pnas.1600197113}, transitions between skyrmion lattice structures induced by a magnetic field~\cite{doi:10.1126/sciadv.1602562}, transitions between skyrmioniums driven by spin-polarized current~\cite{PhysRevB.94.094420}, and the formation of skyrmions via ultrafast laser pulses~\cite{PhysRevLett.110.177205}. The local order in magnetization is disrupted and then re-established to generate skyrmion and skyrmionium structures as demonstrated in~\cite{PhysRevLett.110.177205}. Motivated by these experimental findings, we aim to investigate the generation of skyrmion and skyrmionium structures in a skyrmion lattice configuration during the re-stabilization process through simulations.
\begin{figure}[h]
  \centering
  \subfloat[Magnetization within the centered circle with radius $100\;\mathrm{nm}$ is randomly revalued and re-stabilized.]{\includegraphics[width=1.8in]{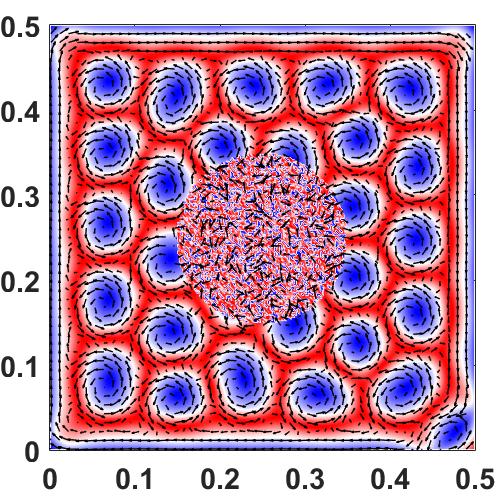}
\includegraphics[width=1.8in]{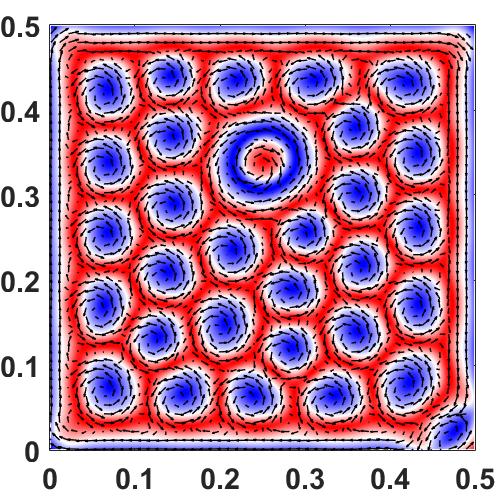}
  \includegraphics[width=1.9in]{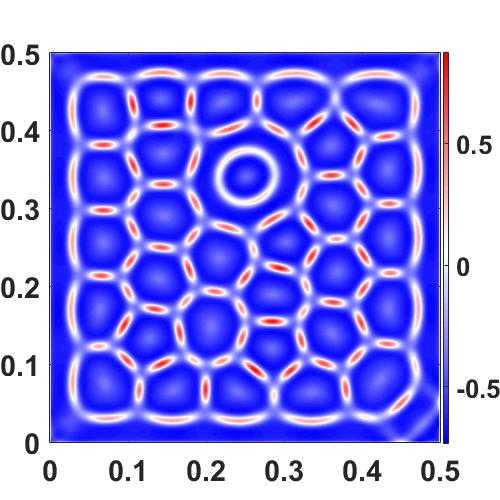}}
  \quad
  \subfloat[Magnetization within the centered circle with radius $120\;\mathrm{nm}$ is randomly revalued and re-stabilized.]{\includegraphics[width=1.8in]{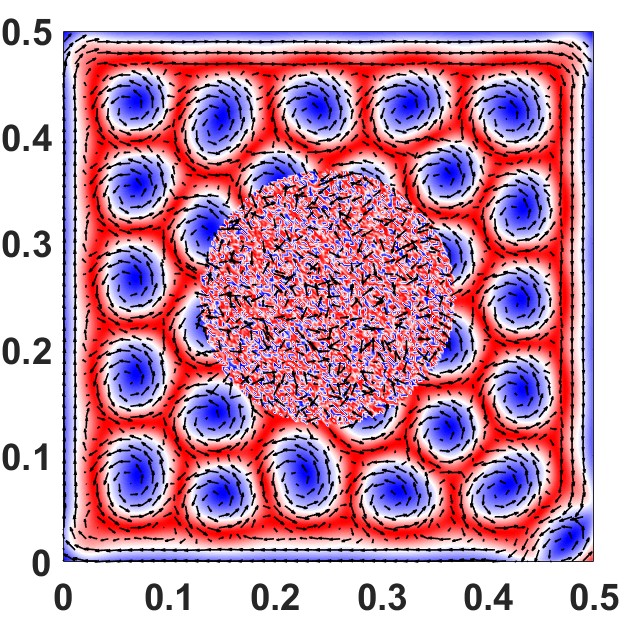}
\includegraphics[width=1.8in]{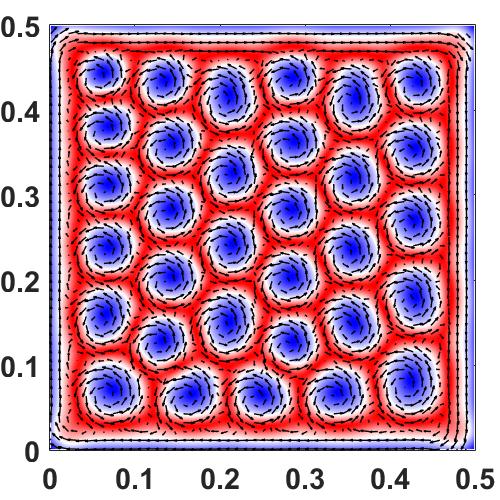}
  \includegraphics[width=1.9in]{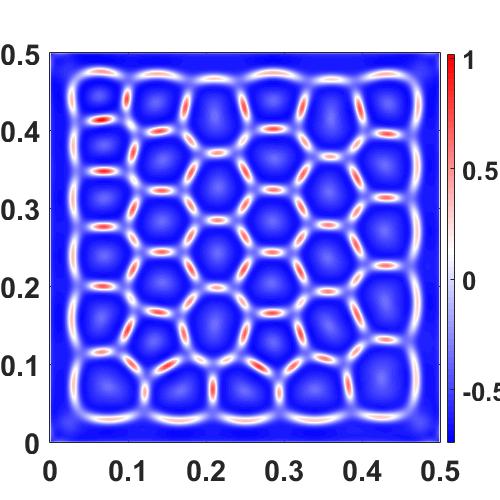}}
  \caption{Generation of skyrmioniums and skyrmions in a skyrmion lattice under pertubations. The initial magnetization configuration is a skyrmion lattice with the magnetization over a centered circle randomly pertubated, which is fed into the LL dynamics as the initial condition. A new stable magnetization configuration is obtained by following the LL equation. First column: initial configuration. Second column: stable magnetization configuration. Third column: energy density distribution of the stable magnetization configurations.}
\label{fig:random-initialization}
\end{figure}

In our simulations, we perturbe the magnetization order of the stable magnetic texture depicted in \cref{fig:500nm-lattice} by locally revaluating it randomly, in accordance with the LL equation. Specifically, in \cref{fig:random-initialization}, the magnetization order within a centered circular domain was randomly revalued while the radius of the circle was adjusted. The results demonstrate that this perturbation spontaneously induces the formation of either a skyrmionium or several skyrmions, thus initiating the transition of the skyrmion structure. A damping value of $\alpha = 0.1$ was employed, as depicted in \cref{fig:random-initialization} where the generation of the skyrmionium is observed as a consequence of the aforementioned perturbation.

\section{Transition paths}
\label{sec:application-string-method}

The collapse of an isolated skyrmion, its subsequent escape through a boundary, and division into two identical skyrmions, have been observed. The first two transitions shed light on the connection between isolated skyrmions and the classical magnetic saturation state, while the last transition doubles the system's topological number. This study aims to identify transition paths pertaining to changes in the topological number of both isolated skyrmions and skyrmion clusters. To this end, the string method, previously described in literature, has been employed to identify transition paths between magnetic textures with differing topological numbers. Subsequently, micromagnetic simulations have been conducted to realize the phase transition corresponding to the identified path.

The primary objective of this study is to investigate the transition between a skyrmion and a skyrmionium. By considering an isolated skyrmionium with a topological number $Q = 0$ and an isolated skyrmion with $Q=1$ as the two initial endpoints, a transition path can be established through the implementation of the string method. Such a path is illustrated in \cref{fig:isolsted_skym2sky}, where stable and saddle points are highlighted. Specifically, the transition from the skyrmionium to skyrmion with $Q=-1$ is accompanied by the escape of a nucleus located at the core of the skyrmionium from the protective ring pattern. It is worth noting that the nucleus of the skyrmionium can be regarded as a reversed skyrmion and can be easily controlled by an in-plane current, while the external ring cannot be as readily removed due to the pronounced boundary protection. Hence, this study provides valuable insights into the transition from an isolated skyrmionium to an isolated skyrmion, whereby the topological number is effectively erased as a result of the removal of the nucleus skyrmion.
\begin{figure}[h]
  \centering
\begin{overpic}
  [width=6.5in]{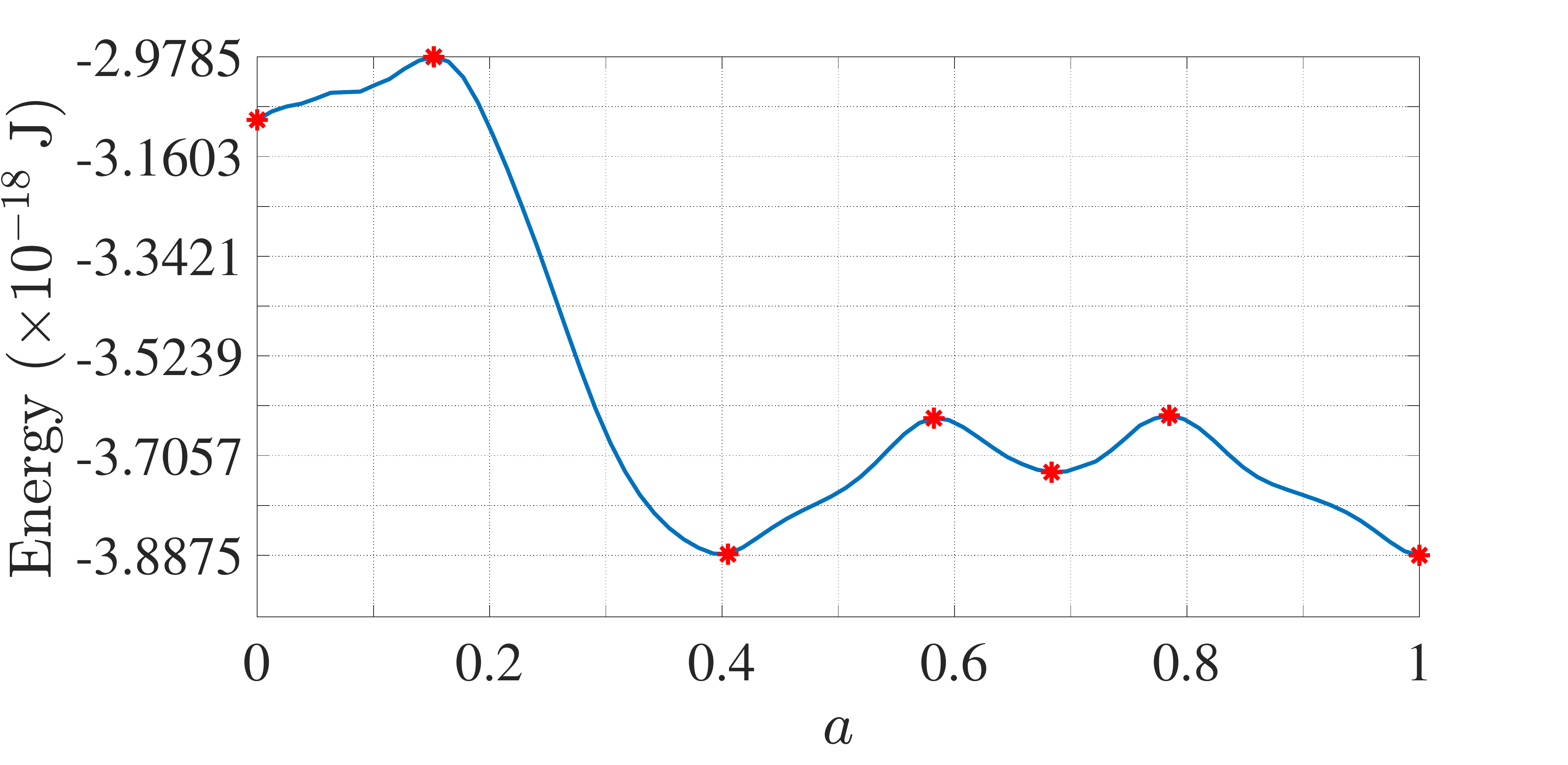}
\put(20,15){\fcolorbox{black}{white}{A}}
\put(18,39){B}
\put(26,42){C}
\put(45,15){D}
\put(58.5,24){E}
\put(66.5,20){F}
\put(73,24){G}
\put(87,11){H}
\end{overpic}
  \\
\begin{overpic}
  [width=0.8in]{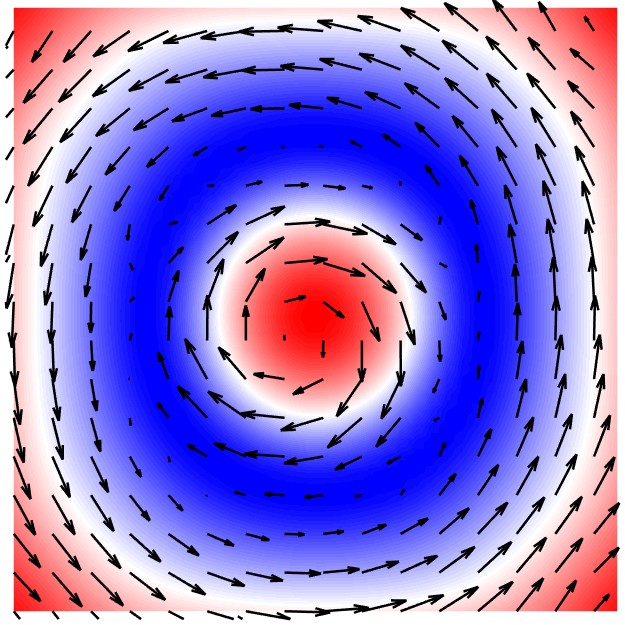}
\put(2,80){\fcolorbox{black}{white}{\tiny{B}}}
\end{overpic}
\begin{overpic}
  [width=0.8in]{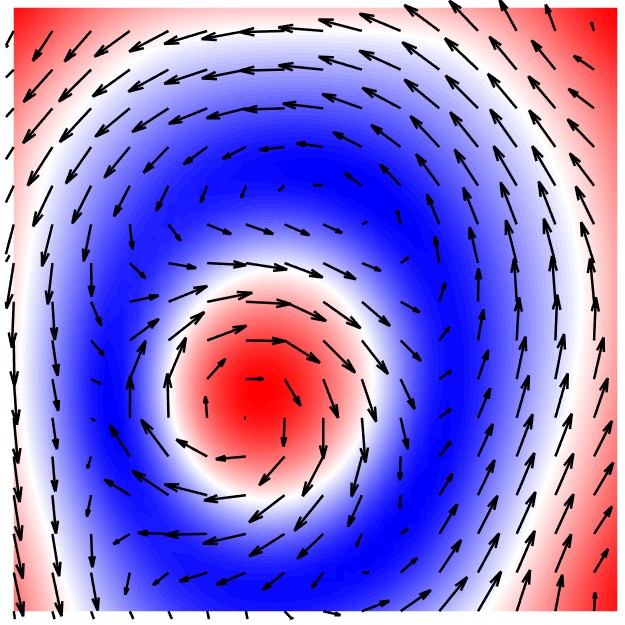}
\put(2,80){\fcolorbox{black}{white}{\tiny{C}}}
\end{overpic}
\begin{overpic}
  [width=0.8in]{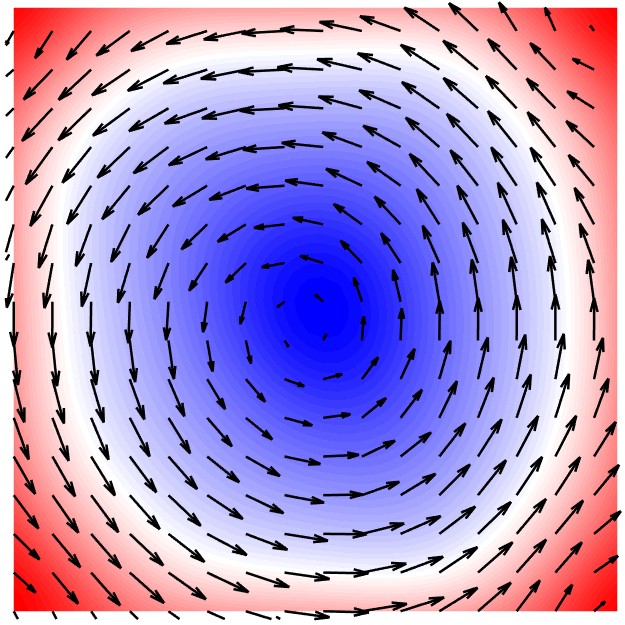}
\put(2,80){\fcolorbox{black}{white}{\tiny{D}}}
\end{overpic}
\begin{overpic}
  [width=0.8in]{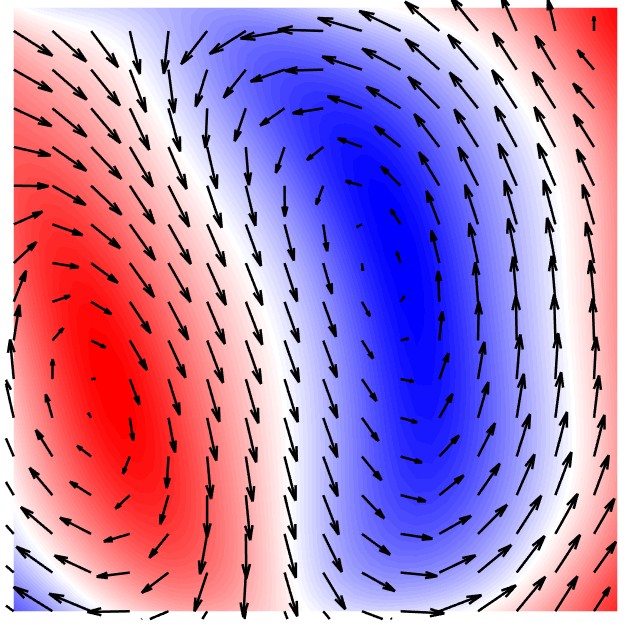}
\put(2,80){\fcolorbox{black}{white}{\tiny{E}}}
\end{overpic}
\begin{overpic}
  [width=0.8in]{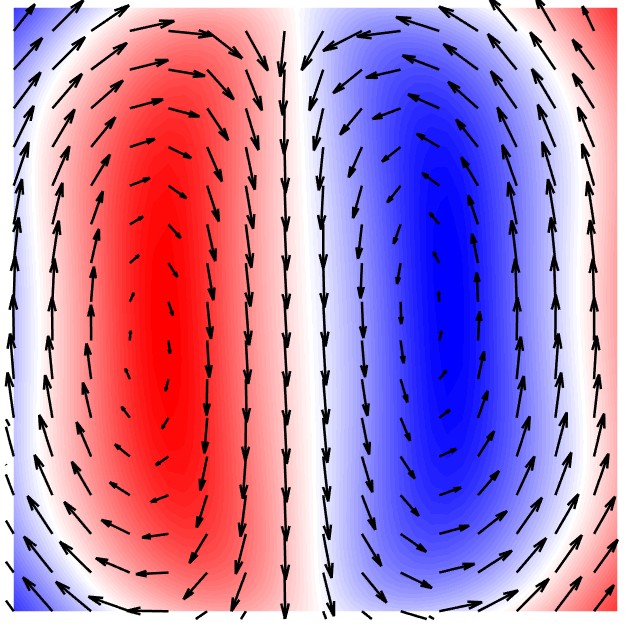}
\put(2,80){\fcolorbox{black}{white}{\tiny{F}}}
\end{overpic}
\begin{overpic}
  [width=0.8in]{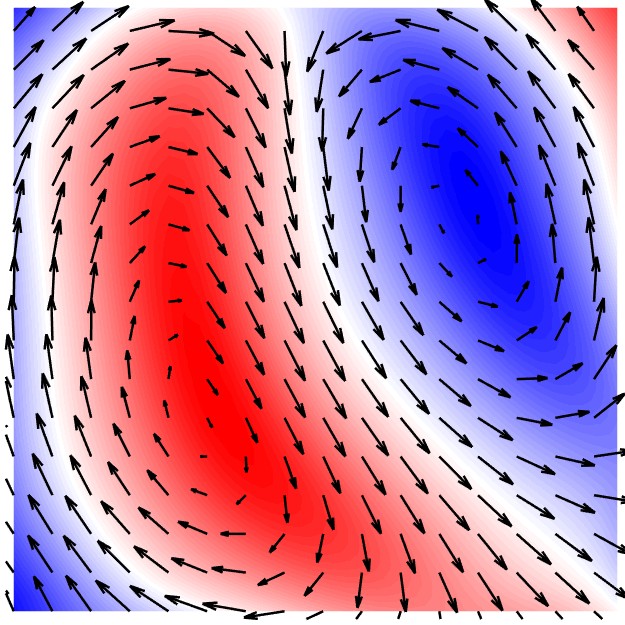}
\put(2,80){\fcolorbox{black}{white}{\tiny{G}}}
\end{overpic}
\begin{overpic}
  [width=0.8in]{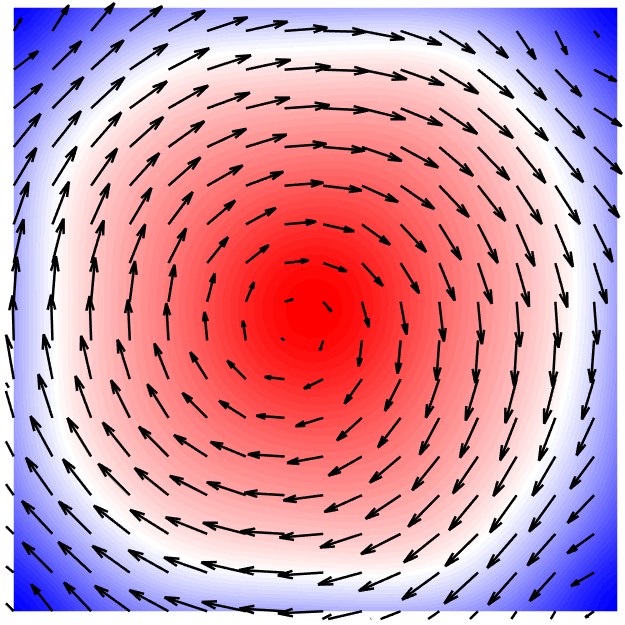}
\put(2,80){\fcolorbox{black}{white}{\tiny{H}}}
\end{overpic}
  \caption{Phase transition between an isolated skyrmionium $Q = 0$ and an isolated skyrmion $Q = 1$. (A) The transition path between the skyrmionium and the skyrmion. The red points denote the local minima and saddle points on the path. (B)-(H) are the magnetization configurations corresponding to minima and saddle points on the MEP.}
\label{fig:isolsted_skym2sky}
\end{figure}

During the transition path between a skyrmion with a topological number of $Q = -1$ and one with $Q = 1$, a metastable state characterized by a skyrmion junction with a topological number of $Q = 0$ is encountered. The change in topological number follows the sequence $\pm1\rightarrow 0\rightarrow\mp 1$ along this transition path, as demonstrated in \cref{fig:80nm_simulations_STT}. In order to realize the transition from a skyrmion with $Q = 1$ to one with $Q = -1$ in the LL equation, an in-plane current is applied, with a chosen damping parameter of $\alpha = 0.6$. The simulation is conducted in two stages. Firstly, from $0\sim 850\;\mathrm{ps}$, a current with $u = -bJ = -150\;\mathrm{m/s}$ and $\beta = 0.5$ along the direction $-\mathbf{e}_1$ is applied, and the system reaches the skyrmion junction. Subsequently, from $1.5\;\mathrm{ns}$ to $2.1\;\mathrm{ns}$, a current with $u = -50\;\mathrm{m/s}$ and $\beta = 0.4$ is applied along the direction $-\mathbf{e}_2$, and the system relaxes to the skyrmion with $Q = -1$. The simulation results demonstrate that the energy barrier during the transition from a skyrmion to a skyrmion junction is higher than that from the skyrmion junction to the skyrmion, owing to the superior stability of isolated skyrmions.
\begin{figure}[h]
  \centering
  \includegraphics[width = 6.5in]{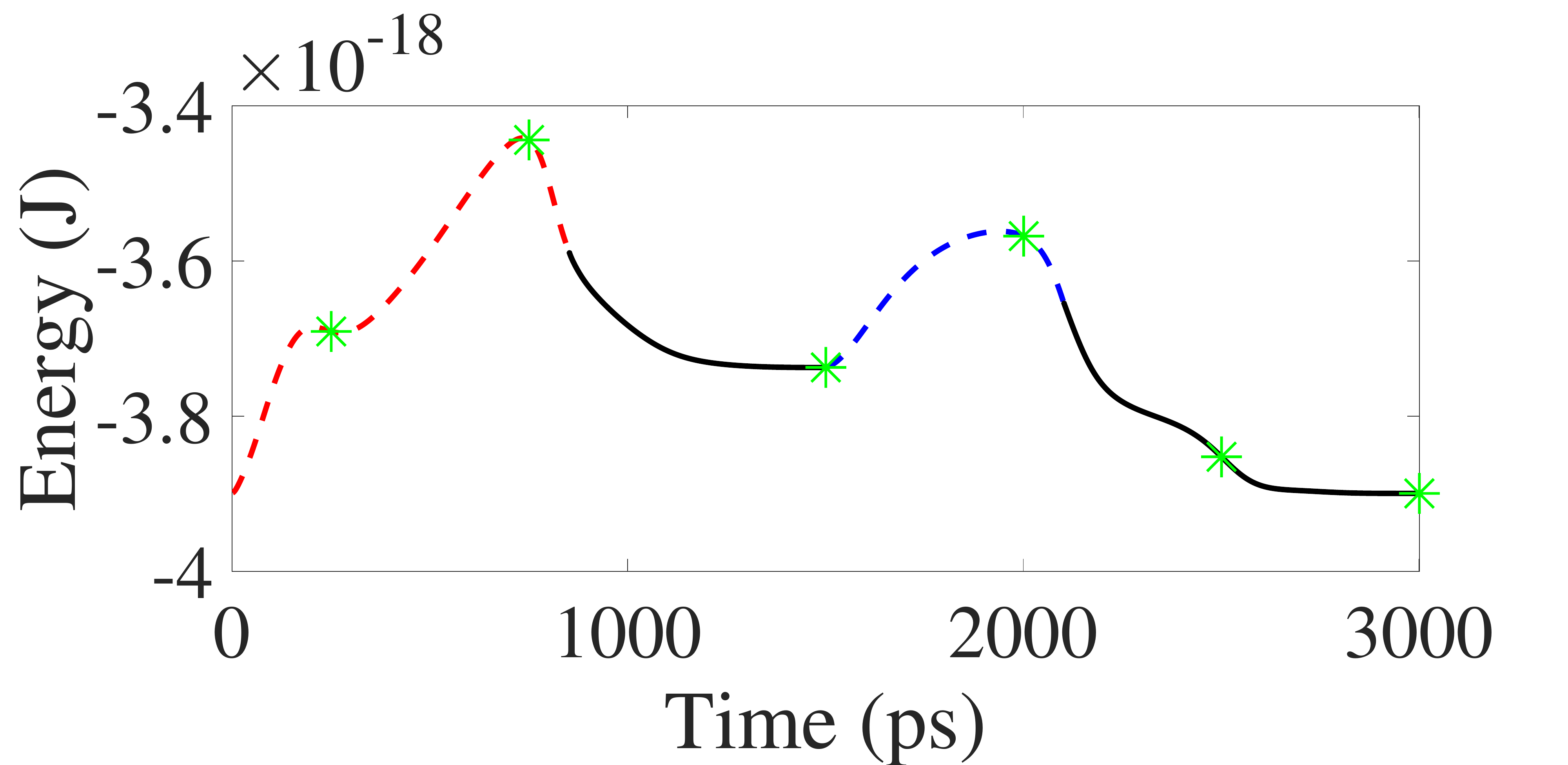}
  \\
  \begin{overpic}
    [width=.8in]{skym2sky_stataionary6.jpg}
    \put(5,80){\fcolorbox{black}{white}{\tiny{$0\;\mathrm{ps}$}}}
  \end{overpic}\begin{overpic}
    [width=.8in]{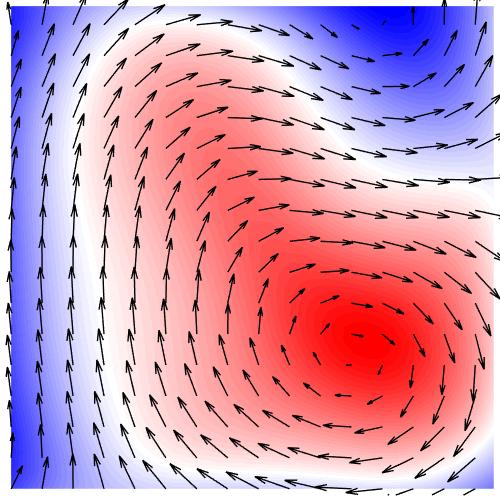}
    \put(5,80){\fcolorbox{black}{white}{\tiny{$250\;\mathrm{ps}$}}}
  \end{overpic}
  \begin{overpic}
    [width=.8in]{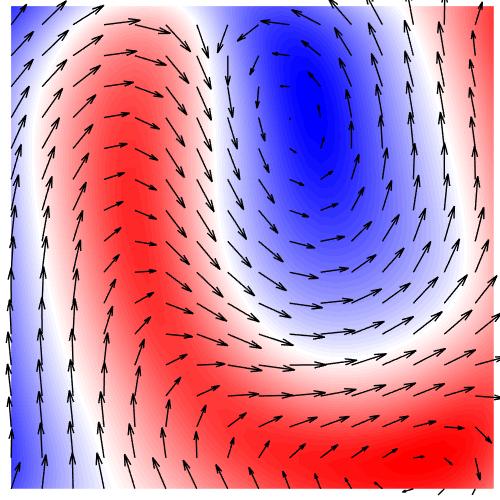}
    \put(5,80){\fcolorbox{black}{white}{\tiny{$750\;\mathrm{ps}$}}}
  \end{overpic}
  \begin{overpic}
    [width=.8in]{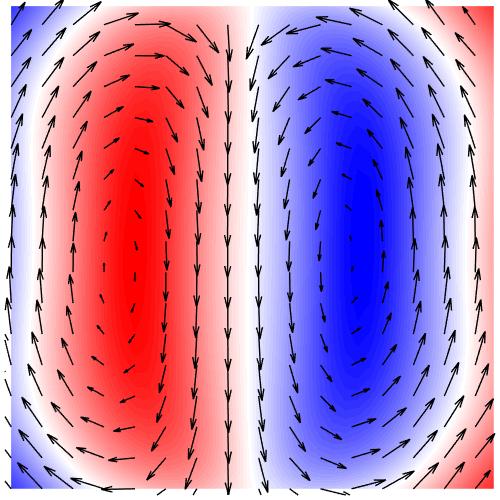}
    \put(5,80){\fcolorbox{black}{white}{\tiny{$1.5\;\mathrm{ns}$}}}
  \end{overpic}
  \begin{overpic}
    [width=.8in]{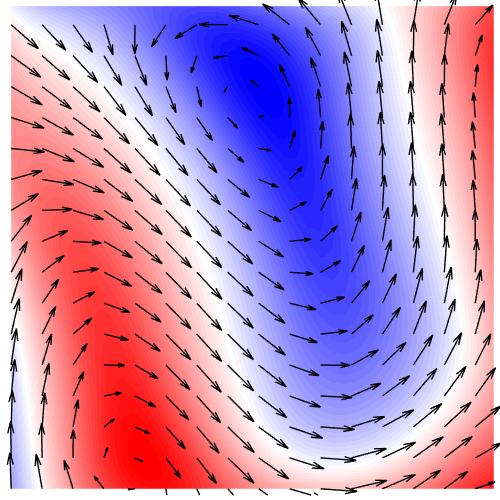}
    \put(5,80){\fcolorbox{black}{white}{\tiny{$2\;\mathrm{ns}$}}}
  \end{overpic}
  \begin{overpic}
    [width=.8in]{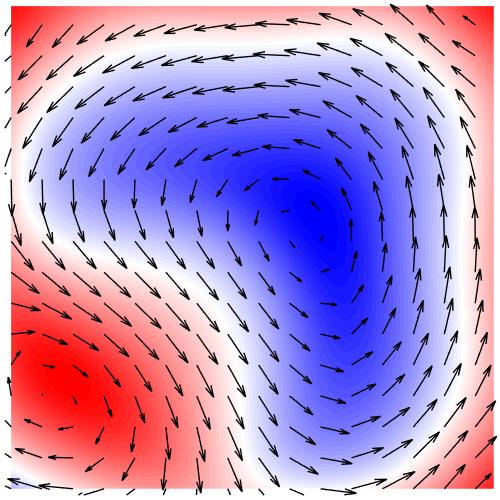}
    \put(5,80){\fcolorbox{black}{white}{\tiny{$2.5\;\mathrm{ns}$}}}
  \end{overpic}
  \begin{overpic}
    [width=.8in]{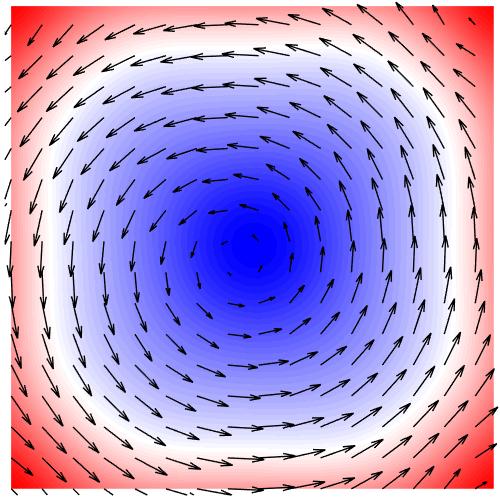}
    \put(5,80){\fcolorbox{black}{white}{\tiny{$3\;\mathrm{ns}$}}}
  \end{overpic}
  \caption{Energy evolution when a spin-polarized current is applied and 7 representative snapshots of magnetization configuration are visualized. Top row: energy evolution driven by the LL equation. Black real lines represent the dynamics when the current is removed, while red and blue dashed lines represent the dynamics when the current is applied with different directions and strengthes. Bottom row: 7 snapshots at different times, corresponding to the pentagrams during the energy evolution.}
\label{fig:80nm_simulations_STT}
\end{figure}

This study then proceeds to investigate the transition between skyrmion clusters with a change in topological number. The transition process is illustrated in \cref{fig:skycluster_7to6}, where a skyrmion escapes through a boundary, leading to a transition between two skyrmion clusters and a subsequent reduction in the system's topological number. It is worth noting that although there are alternative mechanisms for inducing this transition, such as the collapse of a skyrmion or the merger of two skyrmions into one, the transition path illustrated in \cref{fig:skycluster_7to6} represents the MEP in this particular case.
\begin{figure}[htbp]
  \centering
  \begin{overpic}
    [width=6.5in]{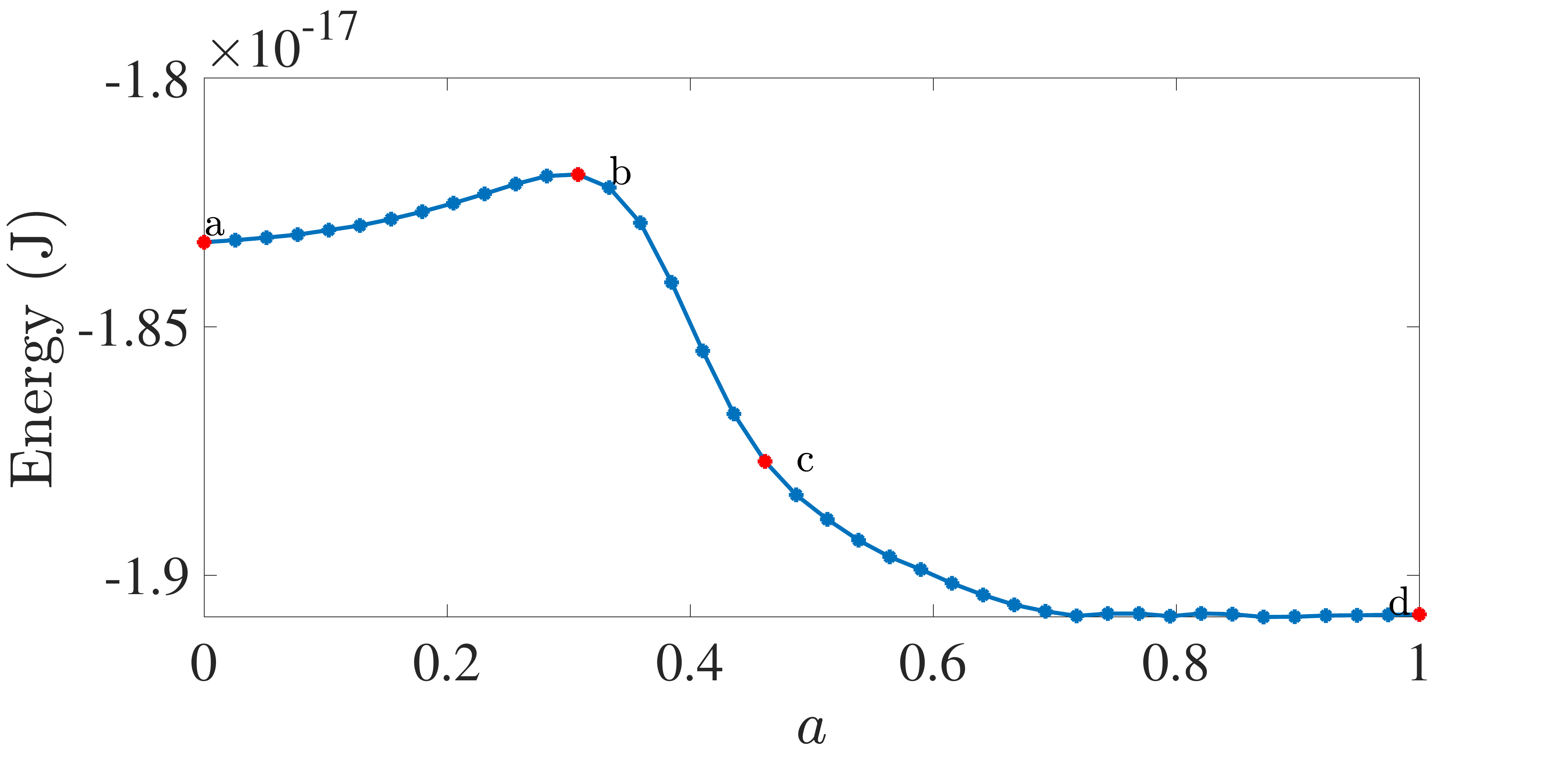}
\put(14,20){\includegraphics[width=0.8in]{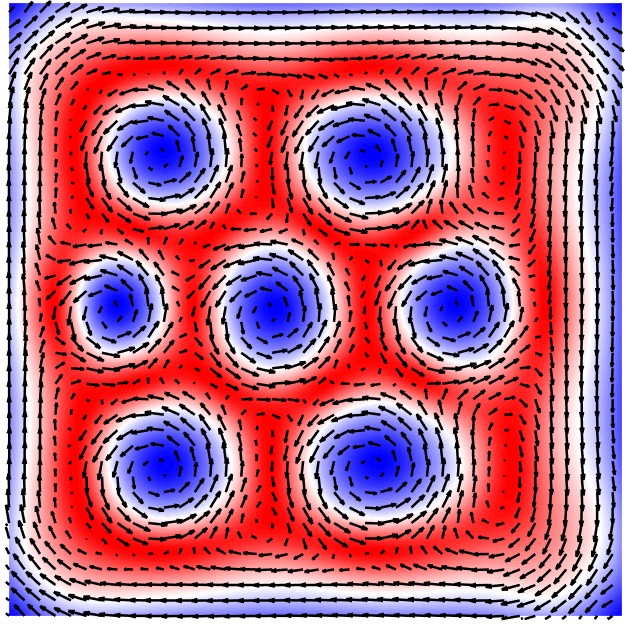}}
    \put(23,21){\fcolorbox{black}{white}{\tiny{a}}}
\put(28,22){\includegraphics[width=0.8in]{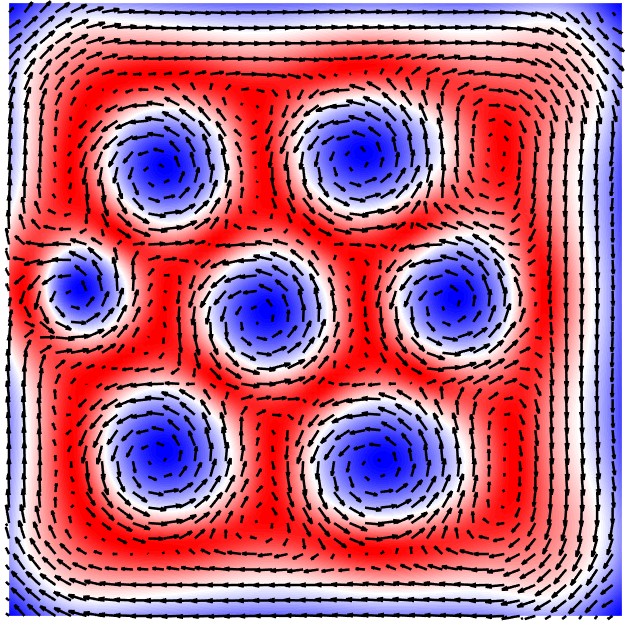}}
    \put(37,23){\fcolorbox{black}{white}{\tiny{b}}}
\put(49,21){\includegraphics[width=0.8in]{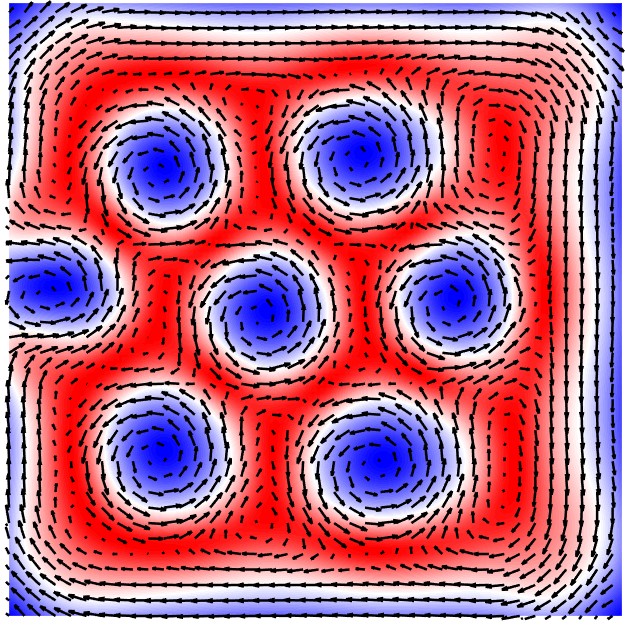}}
    \put(58,22){\fcolorbox{black}{white}{\tiny{c}}}
\put(76,10){\includegraphics[width=0.8in]{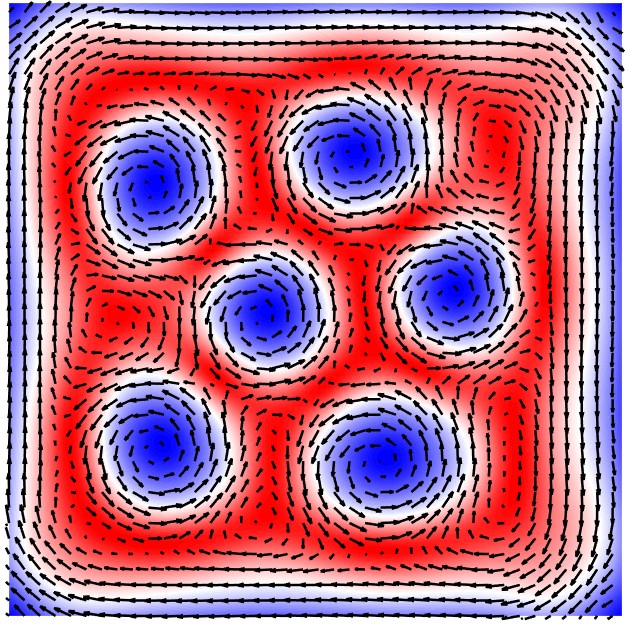}}
    \put(85,11){\fcolorbox{black}{white}{\tiny{d}}}
  \end{overpic}\\
  \caption{Phase transition between skyrmion clusters and a skyrmion escapes from a boundary.}
\label{fig:skycluster_7to6}
\end{figure}

The uniform application of an external field (i.e., magnetic or current field) leads to the simultaneous movement of all skyrmions within the cluster. Hence, it becomes challenging to induce transitions between skyrmion clusters using this approach. As a viable alternative, we propose the use of a local magnetic field to manipulate individual skyrmions. To demonstrate this, we consider the use of a magnetic field to pull the central skyrmion from the cluster, causing it to eventually escape through a boundary. In this demonstration, the magnetic field strength is chosen to be $-2.5\mathbf{e}_{3}\;\mathrm{T}$, which proves adequate for overcoming boundary stiffness, but is considered too strong for inducing desired skyrmion movements.
\begin{figure}[ht]
  \centering
  \begin{overpic}
    [width=1.2in]{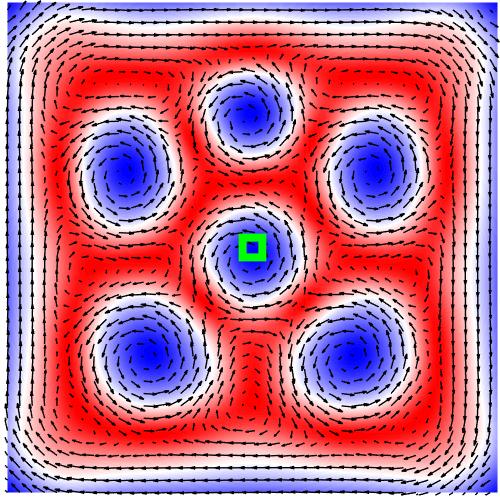}
    \put(5,80){\fcolorbox{black}{white}{\tiny{$0\;\mathrm{ps}$}}}
  \end{overpic}
  \begin{overpic}
    [width=1.2in]{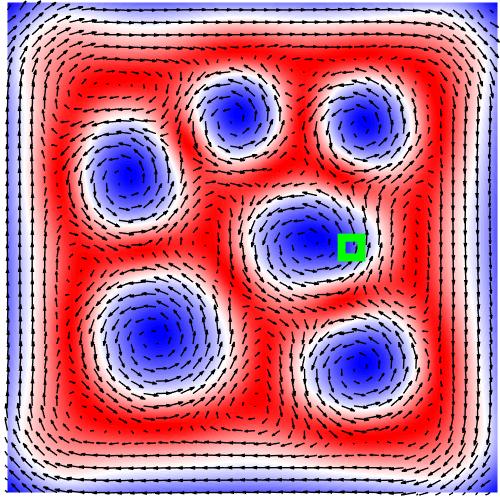}
    \put(5,80){\fcolorbox{black}{white}{\tiny{$1\;\mathrm{ns}$}}}
  \end{overpic}
  \begin{overpic}
    [width=1.2in]{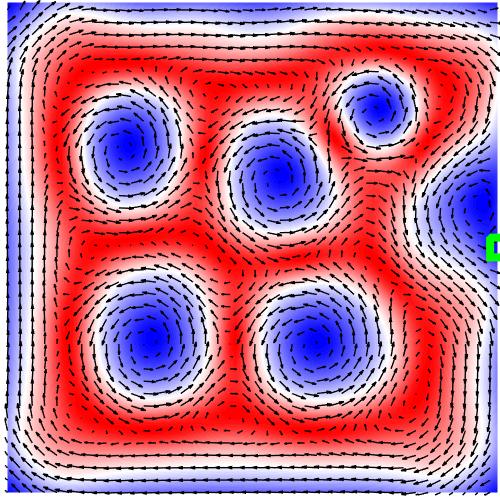}
    \put(5,80){\fcolorbox{black}{white}{\tiny{$2.5\;\mathrm{ns}$}}}
  \end{overpic}
  \begin{overpic}
    [width=1.2in]{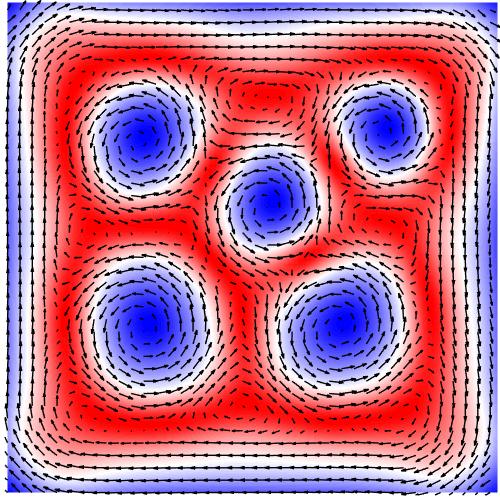}
    \put(5,80){\fcolorbox{black}{white}{\tiny{$3\;\mathrm{ns}$}}}
  \end{overpic}
  \begin{overpic}
    [width=1.2in]{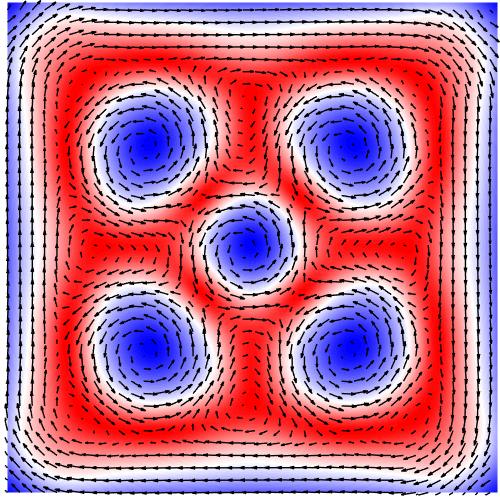}
    \put(5,80){\fcolorbox{black}{white}{\tiny{$12.5\;\mathrm{ns}$}}}
  \end{overpic}
  \caption{A local out-of-plane magnetic field applied over the green square domain drives one skyrmion across the boundary. The magnetic field with magnitude $-2.5\mathbf{e}_{\mathrm{3}}\;\mathrm{T}$ moves along the $\mathbf{e}_{\mathrm{1}}$ direction with velocity $40\;\mathrm{m}/\mathrm{s}$.}
\end{figure}
In addition, skyrmions can be attracted or repelled by an out-of-plane magnetic field. For instance, when a local field is applied over an in-plane domain of $40\times40\;\mathrm{nm}^2$ during a time interval of $0\sim 200\;\mathrm{ps}$, two neighboring skyrmions are observed to move towards each other, eventually resulting in their merger.
\begin{figure}[ht]
  \centering
  \begin{tikzpicture}
    \node[anchor=south west,inner sep=0] at (0,0) {\begin{overpic}
    [width=1.2in]{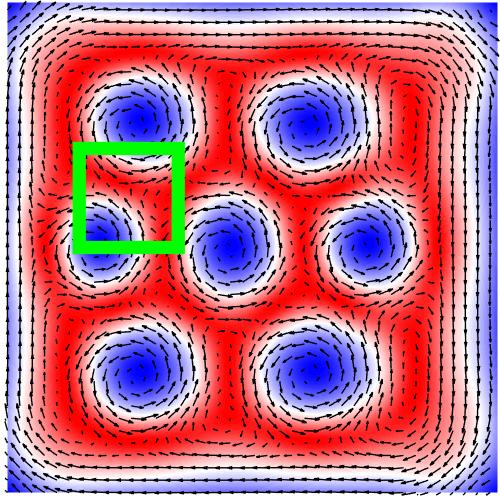}
    \put(5,8){\fcolorbox{black}{white}{\tiny{$0\;\mathrm{ps}$}}}
  \end{overpic}};
\end{tikzpicture}
\begin{tikzpicture}
    \node[anchor=south west,inner sep=0] at (0,0) {\begin{overpic}
    [width=1.2in]{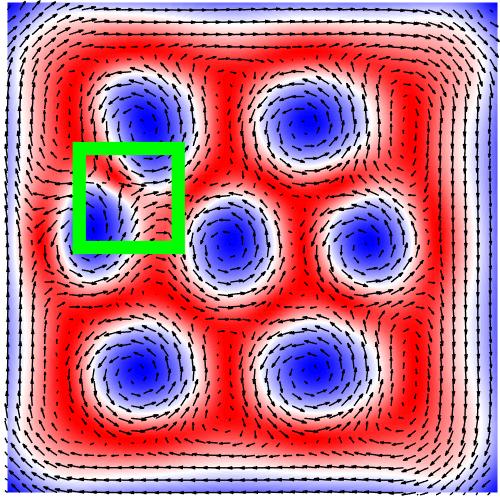}
    \put(5,8){\fcolorbox{black}{white}{\tiny{$40\;\mathrm{ps}$}}}
  \end{overpic}};
\end{tikzpicture}
\begin{tikzpicture}
    \node[anchor=south west,inner sep=0] at (0,0) {\begin{overpic}
    [width=1.2in]{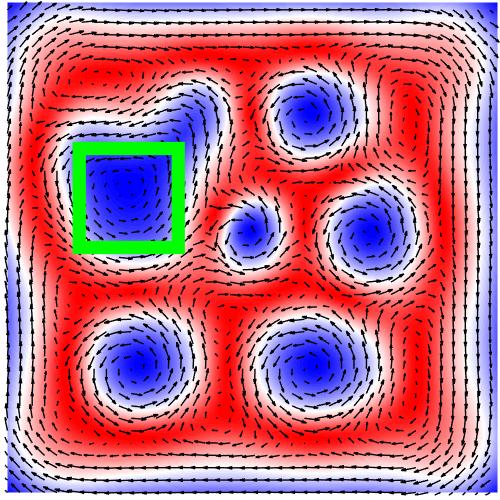}
    \put(5,8){\fcolorbox{black}{white}{\tiny{$200\;\mathrm{ps}$}}}
  \end{overpic}};
\end{tikzpicture}
\begin{tikzpicture}
    \node[anchor=south west,inner sep=0] at (0,0) {\begin{overpic}
    [width=1.2in]{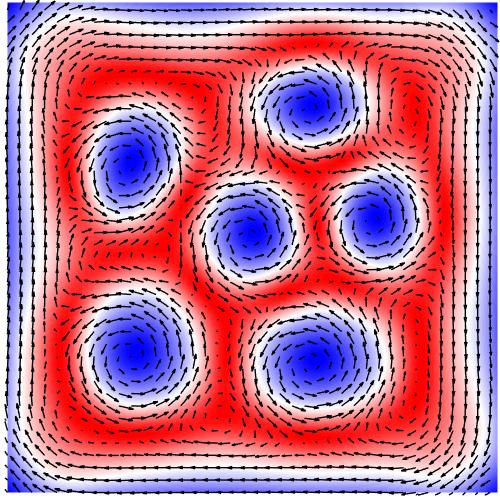}
    \put(5,8){\fcolorbox{black}{white}{\tiny{$400\;\mathrm{ps}$}}}
  \end{overpic}};
\end{tikzpicture}
  \begin{overpic}
    [width=1.2in]{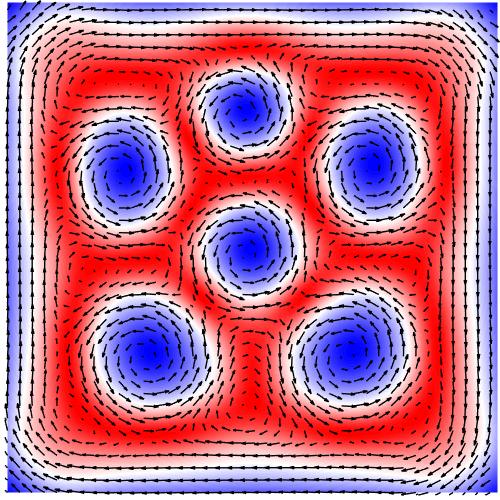}
    \put(5,8){\fcolorbox{black}{white}{\tiny{$5.8\;\mathrm{ns}$}}}
  \end{overpic}
  \caption{Mergence of two skyrmions when an out-of-plane magnetic field $-0.5\mathbf{e}_{\mathrm{z}}\;\mathrm{T}$ is applied over the green square domain within the time period $[0, 200\;\mathrm{ps}]$. }
  \label{fig:snopshots6to5}
\end{figure}

\section{Conclusion}
\label{sec:conclusion}

This study proposes a generalized, second-order accurate, semi-implicit projection scheme for solving the Landau-Lifshitz (LL) equation with the Dzyaloshinskii-Moriya interaction (DMI), which enables the use of larger step-sizes for micromagnetics simulations. It is observed that the LL system exhibits a dynamic instability, and that various stable magnetization configurations can be generated by means of simple initialization as the damping parameter varies, including isolated skyrmions, isolated skyrmionium, skyrmion clusters, skyrmionium clusters, and combinations thereof, in a controlled manner. The string method is employed to identify minimal energy paths connecting different stable magnetization configurations. In particular, the transition between a skyrmion with $Q = 1$ and one with $Q = -1$ involves a local minimizer characterized by a skyrmion junction with $Q = 0$. Moreover, for skyrmion clusters, a transition path is determined that involves a skyrmion escaping through the boundary. The proposed method offers a dependable strategy for studying skyrmion textures and their transition paths, which can greatly enhance our understanding of magnetization dynamics for spintronics applications.

\section*{Acknowledgments}
P. Li thanks for the helpful discussion of Zhiwei Sun, and acknowledges the program of China Scholarships Council No. 202106920036. S. Gu acknowledges the support of NSFC 11901211 and the Natural Science Foundation of Top Talent of SZTU GDRC202137. J. Lan acknowledges the support of NSFC (Grant No. 11904260) and Natural Science Foundation of Tianjin (Grant No. 20JCQNJC02020). J. Chen acknowledges the support of NSFC (Grant No. 11971021). R. Du was supported by NSFC (Grant No. 12271360).

\bibliographystyle{unsrt}
\bibliography{refs}

\end{document}